\numberwithin{equation}{section}
\theoremstyle{plain}
\newtheorem{thm}{Theorem}[section]
\newtheorem{lem}[thm]{Lemma}
\newtheorem{prop}[thm]{Proposition}
\newtheorem{cor}[thm]{Corollary}
\newtheorem*{thm*}{Theorem}
\newtheorem*{lem*}{Lemma}
\newtheorem*{prop*}{Proposition}
\newtheorem*{cor*}{Corollary}
\theoremstyle{definition}
\newtheorem{defn}[thm]{Definition}
\newtheorem*{defn*}{Definition}
\newtheorem{rem}[thm]{Remark}
\newtheorem*{rem*}{Remark}
\newtheorem{notation}[thm]{Notation}{}
{}
{}
{}
\theoremstyle{remark}
{}
{}
{}
\def\cie{\subseteq}
\def\iso{\cong}
\def\un{\cup}
\def\intersec{\cap}
\def\to{\longrightarrow}
\def\rimp{\Rightarrow}
\def\int{\mathbb{Z}}
\def\str{\mathcal{O}}
\def\a{\alpha}
\def\s{\sigma}
\def\S{\Sigma}
\def\t{\tau}
\def\D{\mathsf{D}}
\def\SS{\mathsf{S}}
\DeclareMathOperator{\Spec}{Spec}
\DeclareMathOperator{\Sing}{Sing}
\DeclareMathOperator{\Spc}{Spc}
\DeclareMathOperator{\supp}{supp}
\DeclareMathOperator{\id}{id}
\DeclareMathOperator{\Hom}{Hom}
\DeclareMathOperator{\hocolim}{hocolim}
\DeclareMathOperator{\QCoh}{QCoh}
\DeclareMathOperator{\GInj}{GInj}
\DeclareMathOperator{\sGInj}{\underline{GInj}}
\title{Filtrations via tensor actions}
\author{Greg Stevenson}
\address{Universit\"at Bielefeld, Fakult\"at f\"ur Mathematik, BIREP Gruppe, Postfach 10\,01\,31, 33501 Bielefeld, Germany.}
\email{gstevens@math.uni-bielefeld.de}
\begin{document}

\subjclass[2000]{18E30  
(14F05, 14M10)}

\keywords{Triangulated category, localizing subcategory, tensor triangular geometry, tensor actions, filtration, Gorenstein injectives}

\begin{abstract}
\noindent We extend work of Balmer, associating filtrations of essentially small tensor triangulated categories to certain dimension functions, to the setting of actions of rigidly-compactly generated tensor triangulated categories on compactly generated triangulated categories. We show that the towers of triangles associated to such a filtration can be used to produce filtrations of Gorenstein injective quasi-coherent sheaves on Gorenstein schemes. This extends and gives a new proof of a result of Enochs and Huang. In the case of local complete intersections a further refinement of this filtration is given and we comment on some special properties of the associated spectral sequence in this case.
\end{abstract}

\maketitle

\tableofcontents

\section{Introduction}
In \cite{BaSpec} Paul Balmer has introduced tensor triangular geometry providing a unified framework in which to view the various classification theorems for thick tensor ideals of tensor triangulated categories in fields varying from algebraic geometry to KK-theory. Since then the study of the spectrum of a tensor triangulated category has taken on a life of its own; new examples have been computed \cite{DSgraded},\cite{DubeyMallick}, the abstract theory has been further developed (of course with applications) \cite{BaSSS},\cite{BaFilt}, and the framework has been extended to apply to a wider variety of examples \cite{BaRickard},\cite{StevensonActions}. The current work is concerned with extending some of the abstract theory developed by Balmer in \cite{BaFilt} to the context of tensor actions.

In \cite{BaFilt} Balmer has described how to produce a filtration of an essentially small tensor triangulated category given a dimension function on its spectrum. We prove that the analogous filtration exists for a compactly generated triangulated category $\mathcal{K}$ in the situation where a (nice) rigidly-compactly generated tensor triangulated $\mathcal{T}$ acts on $\mathcal{K}$ (in particular this covers the case where $\mathcal{T}$ acts on itself so extends these filtrations to the setup of \cite{BaRickard}). An advantage of working with these large triangulated categories is that the inclusions coming from the filtration admit coproduct preserving right adjoints. Thus one obtains for each object in $\mathcal{K}$ a functorial tower of triangles. This tower describes how the given object can be constructed from its local pieces as one knows is possible by the local-to-global principle. This is described in our main abstract result Theorem~\ref{thm_gen_filt}.

As an application we continue the study of the singularity category of a complete intersection. By applying the formalism we develop to the singularity category we obtain canonical filtrations of Gorenstein injective sheaves on Gorenstein schemes (Theorem~\ref{thm_gor_filt}). This extends work of Enochs and Huang \cite{EnochsHuang} and yields a new proof of their result in the affine case. In particular, it further strengthens the analogy between usual homological algebra and relative homological algebra. On a noetherian scheme $X$ one has, as in the case of commutative noetherian rings, a classification of indecomposable injective sheaves in terms of points of $X$. We show that, when $X$ is Gorenstein, any quasi-coherent Gorenstein injective sheaf on $X$ has a filtration whose factors decompose into pieces supported at points of $X$.

We also make some remarks concerning the spectral sequences associated to the filtrations we produce, both in general and in the special case of complete intersection rings where the $E_2$ page has some amusing special properties.

\section{Preliminary material}
We briefly recall some of the notation and constructions which we will use. For further details on tensor triangular geometry and tensor actions the interested reader should consult \cite{BaSpec} and \cite{StevensonActions} respectively (an excellent overview of tensor triangular geometry is also given in \cite{BaICM}).

We will work with \emph{rigidly-compactly generated tensor triangulated categories} throughout. Such a creature is a compactly generated tensor triangulated category $(\mathcal{T},\otimes,\mathbf{1})$ (as usual the monoidal structure is assumed to be symmetric, biexact, and preserve coproducts so that $\mathcal{T}$ has an internal hom, by Brown representability, which we denote by $\hom(-,-)$) such that $\mathcal{T}^{\mathrm{c}}$, the (essentially small) subcategory of compact objects, is a rigid tensor triangulated subcategory. Here $\mathcal{T}^c$ is a \emph{rigid tensor triangulated subcategory} if the monoidal structure and internal hom restrict to $\mathcal{T}^c$ (in particular the unit object $\mathbf{1}$ must be compact), and for all $a$ and $b$ in $\mathcal{T}^c$ the natural map
\begin{displaymath}
a^{\vee}\otimes b \to \hom(a,b)
\end{displaymath}
is an isomorphism, where, $a^{\vee} = \hom(a,\mathbf{1})$.

We associate to such a category its Balmer spectrum $\Spc \mathcal{T}^c$, as in \cite{BaSpec}, which is a spectral space universal with respect to parametrizing thick tensor ideals in terms of its Thomason subsets (i.e.\ those subsets which can be written as unions of closed subsets with quasi-compact complement). For a point $x\in \Spc \mathcal{T}^c$ we denote by $\mathcal{V}(x)$ its closure and by $\mathcal{Z}(x)$ those points which don't specialise to $x$ i.e.\
\begin{displaymath}
\mathcal{Z}(x) = \{y\in \Spc \mathcal{T}^c \; \vert \; x\notin \mathcal{V}(y)\}.
\end{displaymath}

Given a Thomason subset $\mathcal{V} \cie \Spc \mathcal{T}^c$ there are associated tensor idempotent objects of $\mathcal{T}$ denoted by $\mathit{\Gamma}_\mathcal{V}\mathbf{1}$ and $L_\mathcal{V}\mathbf{1}$ satisfying $\mathit{\Gamma}_\mathcal{V}\mathbf{1} \otimes L_\mathcal{V}\mathbf{1} = 0$. The construction of these idempotents can be found in \cite{BaRickard}. In the case that $\Spc \mathcal{T}^c$ is noetherian, which is the situation we will restrict ourselves to, one obtains a tensor idempotent for each point $x$ of the spectrum as follows
\begin{displaymath}
\mathit{\Gamma}_x\mathbf{1} = \mathit{\Gamma}_{\mathcal{V}(x)}\mathbf{1} \otimes L_{\mathcal{Z}(x)}\mathbf{1}.
\end{displaymath}

Now suppose we are given another compactly generated triangulated category $\mathcal{K}$. In \cite{StevensonActions}*{Definition~3.2} we have given a definition of a \emph{left action} of $\mathcal{T}$ on $\mathcal{K}$. This consists of a functor
\begin{displaymath}
\mathcal{T} \times \mathcal{K} \stackrel{\square}{\to} \mathcal{K}
\end{displaymath}
which is exact and coproduct preserving in each variable and satisfies certain natural compatibility conditions. We recall some details from the theory developed in \cite{StevensonActions} which will be required. We view $\mathcal{K}$ as a module for $\mathcal{T}$ and say that a localizing subcategory of $\mathcal{K}$ is a (localizing) \emph{submodule} if it is closed under the action of $\mathcal{T}$.

Let us assume henceforth that $\Spc \mathcal{T}^c$ is noetherian. The action of $\mathcal{T}$ on $\mathcal{K}$ gives rise to a notion of \emph{support} on $\mathcal{K}$ by setting, for $A\in \mathcal{K}$,
\begin{displaymath}
\supp A = \{x\in \Spc \mathcal{T}^c \; \vert \; \mathit{\Gamma}_xA \neq 0\},
\end{displaymath}
where $\mathit{\Gamma}_xA$ is shorthand for $\mathit{\Gamma}_x\mathbf{1}\square A$. The support gives rise to a pair of assignments
\begin{equation}\label{eq_st}
\left\{ \begin{array}{c}
\text{subsets of}\; \Spc\mathcal{T}^c 
\end{array} \right\}
\xymatrix{ \ar[r]<1ex>^\t \ar@{<-}[r]<-1ex>_\s &} \left\{
\begin{array}{c}
\text{localizing submodules} \; \text{of} \; \mathcal{K} \\
\end{array} \right\} 
\end{equation}
where, for a localizing submodule $\mathcal{L}$ we set
\begin{displaymath}
\s(\mathcal{L}) = \supp \mathcal{L} = \{x \in \Spc\mathcal{T}^c \; \vert \; \Gamma_x\mathcal{L} \neq 0\}
\end{displaymath}
and for a subset $W$ of $\Spc \mathcal{T}^c$ we set
\begin{displaymath}
\t(W) = \{A \in \mathcal{K} \; \vert \; \supp A \cie W\}.
\end{displaymath}

Finally, let us recall the local-to-global principle for an action.

\begin{defn}\label{defn_ltg}
We say $\mathcal{T}\times\mathcal{K} \stackrel{\square}{\to} \mathcal{K}$ satisfies the \emph{local-to-global principle} if for each $A$ in $\mathcal{K}$
\begin{displaymath}
\langle A \rangle_\square = \langle \Gamma_x A \; \vert \; x\in \Spc \mathcal{T}^c\rangle_\square,
\end{displaymath}
where for $\mathcal{A}\cie \mathcal{K}$ the notation $\langle \mathcal{A} \rangle_\square$ denotes the smallest localizing submodule containing the objects of $\mathcal{A}$.
\end{defn}

By \cite{StevensonActions}*{Theorem~6.9} the local-to-global principle holds whenever $\mathcal{T}$ is the homotopy category of a stable monoidal model category. Furthermore, it implies that for $A\in \mathcal{K}$ we have $\supp A = \varnothing$ if and only if $A\iso 0$.

\section{Discrete subsets and decompositions}\label{sec_disc}
Throughout this section $\mathcal{T}$ is a rigidly-compactly generated tensor triangulated category. We furthermore assume that $\mathcal{T}$ is the homotopy category of some stable monoidal model category and that $\Spc \mathcal{T}^c$ is noetherian. We fix a compactly generated triangulated category $\mathcal{K}$ and an action of $\mathcal{T}$ on $\mathcal{K}$. As noted above our hypotheses guarantee that the equivalent conditions of \cite{StevensonActions}*{Theorem~6.9} hold i.e., the action satisfies the local-to-global principle (\cite{StevensonActions}*{Definition~6.1}) and the associated support theory can distinguish the zero object of $\mathcal{K}$.

We begin by showing that the specialization relation for points of $\Spc \mathcal{T}^c$ controls the orthogonality relations between the categories $\mathit{\Gamma}_x\mathcal{K}$ for $x\in \Spc \mathcal{T}^c$.

\begin{lem}\label{lem_orthog}
Let $y$ be a point of $\Spc \mathcal{T}^c$ and let $\{x_i\}_{i\in I}$ be a collection of points of $\Spc \mathcal{T}^c$. Then for any object $A \in \mathcal{K}$ and any family $\{B_i\}_{i\in I}$ of objects of $\mathcal{K}$ 
\begin{displaymath}
\Hom(\mathit{\Gamma} _y A,\coprod_{i\in I} \mathit{\Gamma}_{x_i} B_i) \iso \Hom(\mathit{\Gamma}_y A, \coprod_{x_i\in \mathcal{V}(y)}\mathit{\Gamma}_{x_i} B_i).
\end{displaymath}
In particular, if none of the $x_i$ are specializations of $y$ we have
\begin{displaymath}
\coprod_{i\in I} \mathit{\Gamma}_{x_i} B_i \in \mathit{\Gamma}_y\mathcal{K}^{\perp}.
\end{displaymath}
\end{lem}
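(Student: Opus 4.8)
The plan is to reduce everything to the case of a single object $B$ and a single point $x$, and then exploit the idempotent structure of $\mathit{\Gamma}_x\mathbf{1}$ together with the compatibility of the action with the tensor structure on $\mathcal{T}$.

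First, since $\square$ preserves coproducts in each variable and $\Hom(\mathit{\Gamma}_yA,-)$ commutes with the coproduct on the right (the object $\mathit{\Gamma}_yA = \mathit{\Gamma}_y\mathbf{1}\square A$ need not be compact, but the coproduct $\coprod_i\mathit{\Gamma}_{x_i}B_i$ is a coproduct \emph{in} $\mathcal{K}$, so this is fine for the purpose of the first isomorphism only if I argue termwise — more carefully, the decomposition I want to establish is that $\Hom(\mathit{\Gamma}_yA,\mathit{\Gamma}_xB) = 0$ whenever $x\notin\mathcal{V}(y)$, and the displayed isomorphism then follows by splitting the coproduct into the part indexed by $\{i : x_i\in\mathcal{V}(y)\}$ and its complement, provided the $\Hom$ out of $\mathit{\Gamma}_yA$ does commute with this particular coproduct). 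So the crux is the pointwise vanishing statement: if $x\notin\mathcal{V}(y)$ then $\Hom(\mathit{\Gamma}_yB', \mathit{\Gamma}_xB) = 0$ for all $B', B\in\mathcal{K}$, and in fact I will show the stronger assertion $\mathit{\Gamma}_y\mathbf{1}\square\mathit{\Gamma}_xB = 0$ whenever $x\notin\mathcal{V}(y)$, which immediately gives $\mathit{\Gamma}_xB\in\mathit{\Gamma}_y\mathcal{K}^\perp$ and subsumes the "in particular" clause.

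The heart of the argument is therefore the vanishing $\mathit{\Gamma}_y\mathbf{1}\otimes\mathit{\Gamma}_x\mathbf{1} = 0$ in $\mathcal{T}$ when $x\notin\mathcal{V}(y)$, after which one transports it to $\mathcal{K}$ via the associativity/unitality compatibility of the action, which lets one write $\mathit{\Gamma}_y\mathbf{1}\square\mathit{\Gamma}_xB \iso \mathit{\Gamma}_y\mathbf{1}\square(\mathit{\Gamma}_x\mathbf{1}\square B) \iso (\mathit{\Gamma}_y\mathbf{1}\otimes\mathit{\Gamma}_x\mathbf{1})\square B$. To see the vanishing in $\mathcal{T}$: recall $\mathit{\Gamma}_x\mathbf{1} = \mathit{\Gamma}_{\mathcal{V}(x)}\mathbf{1}\otimes L_{\mathcal{Z}(x)}\mathbf{1}$ and similarly for $y$. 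Since $x\notin\mathcal{V}(y)$ we have $x\in\mathcal{Z}(y)$, hence $\mathcal{V}(x)\cie\mathcal{Z}(y)$ (as $\mathcal{Z}(y)$ is specialization-closed), so $\mathit{\Gamma}_{\mathcal{V}(x)}\mathbf{1}$ is built from $\mathit{\Gamma}_{\mathcal{Z}(y)}\mathbf{1}$-acyclic... more precisely, $\mathit{\Gamma}_{\mathcal{V}(x)}\mathbf{1}\otimes\mathit{\Gamma}_{\mathcal{V}(x)}\mathbf{1}\iso\mathit{\Gamma}_{\mathcal{V}(x)}\mathbf{1}$ and $\mathit{\Gamma}_{\mathcal{V}(x)}\mathbf{1}\otimes L_{\mathcal{Z}(y)}\mathbf{1} = 0$ because $\mathcal{V}(x)\cie\mathcal{Z}(y)$ forces $\mathit{\Gamma}_{\mathcal{V}(x)}\mathbf{1}$ to lie in the localizing ideal generated by $\mathit{\Gamma}_{\mathcal{Z}(y)}\mathbf{1}$, on which $L_{\mathcal{Z}(y)}\mathbf{1}\otimes-$ vanishes. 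Tensoring the two expressions for $\mathit{\Gamma}_x\mathbf{1}$ and $\mathit{\Gamma}_y\mathbf{1}$ together and commuting the factors, one factor $\mathit{\Gamma}_{\mathcal{V}(x)}\mathbf{1}\otimes L_{\mathcal{Z}(y)}\mathbf{1} = 0$ kills the product. I should double-check the precise standard facts about $\mathit{\Gamma}_\mathcal{V}\mathbf{1}$ and $L_\mathcal{V}\mathbf{1}$ I invoke here — namely idempotence, $\mathit{\Gamma}_\mathcal{V}\mathbf{1}\otimes L_\mathcal{V}\mathbf{1}=0$, and that $\mathcal{W}\cie\mathcal{V}$ implies $\mathit{\Gamma}_\mathcal{W}\mathbf{1}\otimes L_\mathcal{V}\mathbf{1}=0$ — all of which are in \cite{BaRickard} and \cite{StevensonActions}.

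The main obstacle I anticipate is the bookkeeping around coproducts in the first displayed isomorphism: I want $\Hom(\mathit{\Gamma}_yA,-)$ to commute with the relevant coproduct even though $\mathit{\Gamma}_yA$ is generally not compact. The clean way around this is to \emph{not} claim that in general, but instead to observe that the inclusion of the "good" part of the coproduct, $\coprod_{x_i\in\mathcal{V}(y)}\mathit{\Gamma}_{x_i}B_i \to \coprod_{i\in I}\mathit{\Gamma}_{x_i}B_i$, fits into a triangle whose third term is $\coprod_{x_i\notin\mathcal{V}(y)}\mathit{\Gamma}_{x_i}B_i$, and this third term lies in $\mathit{\Gamma}_y\mathcal{K}^\perp$ by the pointwise vanishing applied coproduct-wise (here we only need that $\mathit{\Gamma}_y\mathcal{K}^\perp$ is closed under coproducts, which holds since $\mathit{\Gamma}_yA$ ranges over a set and $\perp$ of any class is localizing). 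Applying $\Hom(\mathit{\Gamma}_yA,-)$ to this triangle and using that the outer term contributes nothing yields the isomorphism with no compactness needed. I would organize the final write-up as: (1) establish $\mathit{\Gamma}_y\mathbf{1}\otimes\mathit{\Gamma}_x\mathbf{1}=0$ for $x\notin\mathcal{V}(y)$; (2) deduce $\mathit{\Gamma}_x\mathcal{K}\cie\mathit{\Gamma}_y\mathcal{K}^\perp$ via the action compatibility, and note $\mathit{\Gamma}_y\mathcal{K}^\perp$ is localizing hence closed under coproducts; (3) split the coproduct and apply $\Hom(\mathit{\Gamma}_yA,-)$ to the resulting triangle to get the displayed isomorphism; (4) the "in particular" is then immediate from (2).
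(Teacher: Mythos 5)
There is a genuine gap at the heart of your argument, in the step you describe as the ``crux'': you assert that the vanishing $\mathit{\Gamma}_y\mathbf{1}\square\mathit{\Gamma}_xB = 0$ ``immediately gives $\mathit{\Gamma}_xB\in(\mathit{\Gamma}_y\mathcal{K})^\perp$.'' This implication is false. The functor $\mathit{\Gamma}_y = \mathit{\Gamma}_{\mathcal{V}(y)}L_{\mathcal{Z}(y)}$ is a composite of an acyclization with a localization, and is itself neither; in particular it does not satisfy $\Hom(\mathit{\Gamma}_yA,C)\iso\Hom(\mathit{\Gamma}_yA,\mathit{\Gamma}_yC)$, which is the adjunction you are implicitly invoking. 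To see the implication must fail, note that by the support formula and local-to-global in $\mathcal{T}$ one has $\supp(\mathit{\Gamma}_y\mathbf{1}\otimes\mathit{\Gamma}_x\mathbf{1}) = \{y\}\intersec\{x\}$, so $\mathit{\Gamma}_y\mathbf{1}\otimes\mathit{\Gamma}_x\mathbf{1} = 0$ for \emph{every} $x\neq y$. Were your implication correct, all the subcategories $\mathit{\Gamma}_x\mathcal{K}$ would then be pairwise orthogonal, contradicting the statement of the lemma (which only claims orthogonality when $x\notin\mathcal{V}(y)$) and indeed the whole specialization-ordered structure underlying the filtrations of Section~\ref{sec_filt}. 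Concretely, for $\mathcal{T}=\mathcal{K}=\D(R)$ with $R$ a DVR, $y$ the generic point and $x$ the closed point, one has $\mathit{\Gamma}_y\mathbf{1}\otimes\mathit{\Gamma}_x\mathbf{1}=0$ yet $\Hom(\mathit{\Gamma}_yR,\S\mathit{\Gamma}_xR)\neq 0$.

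The fix is exactly what distinguishes the paper's proof: instead of using $\mathit{\Gamma}_y$, use the honest colocalization $\mathit{\Gamma}_{\mathcal{V}(y)}$. Since $\supp\mathit{\Gamma}_yA\cie\{y\}\cie\mathcal{V}(y)$, the object $\mathit{\Gamma}_yA$ lies in $\mathit{\Gamma}_{\mathcal{V}(y)}\mathcal{K}$, which is coreflective in $\mathcal{K}$ with coreflection $\mathit{\Gamma}_{\mathcal{V}(y)}$; hence $\Hom(\mathit{\Gamma}_yA,C)\iso\Hom(\mathit{\Gamma}_yA,\mathit{\Gamma}_{\mathcal{V}(y)}C)$. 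The relevant tensor vanishing is then $\mathit{\Gamma}_{\mathcal{V}(y)}\mathbf{1}\otimes\mathit{\Gamma}_x\mathbf{1}=0$, which by the support formula holds precisely when $\mathcal{V}(y)\intersec\{x\}=\varnothing$, i.e.\ $x\notin\mathcal{V}(y)$ --- and this is where the correct hypothesis finally enters. There is also a secondary slip in your argument for the tensor vanishing: $x\notin\mathcal{V}(y)$ does \emph{not} imply $x\in\mathcal{Z}(y)$; by definition $x\in\mathcal{Z}(y)$ iff $y\notin\mathcal{V}(x)$, which fails whenever $x$ is a proper generization of $y$. (Your conclusion $\mathit{\Gamma}_y\mathbf{1}\otimes\mathit{\Gamma}_x\mathbf{1}=0$ is still true for $x\neq y$ by the support argument, but the route via $\mathcal{V}(x)\cie\mathcal{Z}(y)$ is wrong. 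The pair $\mathit{\Gamma}_{\mathcal{V}(y)}\mathbf{1}\otimes L_{\mathcal{Z}(x)}\mathbf{1}$ \emph{does} vanish under exactly the hypothesis $x\notin\mathcal{V}(y)$, which is another sign that $\mathit{\Gamma}_{\mathcal{V}(y)}$ is the right functor to work with.) Your handling of the coproducts in steps (3) and (4) --- splitting into a triangle and using that the right orthogonal is closed under coproducts --- is a perfectly valid alternative to the paper's termwise use of the fact that $\mathit{\Gamma}_{\mathcal{V}(y)}$ is smashing, but it cannot rescue the argument until step (2) is repaired.
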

\begin{proof}
We claim there are isomorphisms
\begin{align*}
\Hom(\mathit{\Gamma}_y A, \coprod_{i\in I}\mathit{\Gamma}_{x_i} B_i) &\iso \Hom(\mathit{\Gamma}_y A, \mathit{\Gamma}_{\mathcal{V}(y)}\coprod_{i\in I}\mathit{\Gamma}_{x_i} B_i) \\
&\iso \Hom(\mathit{\Gamma}_y A, \coprod_{i\in I}\mathit{\Gamma}_{\mathcal{V}(y)} \mathit{\Gamma}_{x_i} B_i) \\
&\iso \Hom(\mathit{\Gamma}_x A, \coprod_{x_i\in \mathcal{V}(y)}\mathit{\Gamma}_{x_i} B_i). \\
\end{align*}
Using the definition of $\mathit{\Gamma}_y$ the first and second isomorphisms are just applications of the fact that $\mathit{\Gamma}_{\mathcal{V}(y)}$ is a coproduct preserving acyclization functor. The final isomorphism is a consequence of the fact that for any $i\in I$
\begin{displaymath}
\supp \mathit{\Gamma}_{\mathcal{V}(y)}\mathit{\Gamma}_{x_i} B_i = (\mathcal{V}(y)\intersec \{x_i\}),
\end{displaymath}
by \cite{StevensonActions}*{Proposition~5.7} and that this intersection is empty if and only if $\mathit{\Gamma}_{\mathcal{V}(y)}\mathit{\Gamma}_{x_i} B$ is zero by the local-to-global principle. Hence $\mathit{\Gamma}_{\mathcal{V}(y)}\mathit{\Gamma}_{x_i} B_i$ is non-zero if only if $x_i\in \mathcal{V}(y)$, and in this case $\mathcal{V}(x_i) \cie \mathcal{V}(y)$ so there is an isomorphism $\mathit{\Gamma}_{\mathcal{V}(y)}\mathit{\Gamma}_{x_i}B \iso \mathit{\Gamma}_{x_i}B$. 
\end{proof}

\begin{rem}\label{rem_totalorthog}
It follows that if $x$ and $y$ are points of $\Spc \mathcal{T}^c$ which are incomparable under the specialization ordering then $\mathit{\Gamma}_x\mathcal{K}$ and $\mathit{\Gamma}_y\mathcal{K}$ are mutually orthogonal i.e.,
\begin{displaymath}
\mathit{\Gamma}_x\mathcal{K} \cie \mathit{\Gamma}_y\mathcal{K}^\perp \quad \text{and} \quad \mathit{\Gamma}_y\mathcal{K} \cie \mathit{\Gamma}_x\mathcal{K}^\perp.
\end{displaymath}
\end{rem}

As in \cite{BIKStrat2} we introduce the following terminology:
\begin{defn}
We say a subset $W \cie \Spc \mathcal{T}^c$ is \emph{discrete} if for all distinct $x,y \in W$ we have
\begin{displaymath}
x\notin \mathcal{V}(y) \quad \text{and} \quad y\notin \mathcal{V}(x),
\end{displaymath}
i.e.,  there are no specialization relations between points of $W$.
\end{defn}

Given Remark \ref{rem_totalorthog} we are led to a decomposition of objects and categories with discrete supports (cf.\ \cite{BIKStrat2}*{Proposition~3.3}).

\begin{lem}\label{lem_discretemap}
Suppose $A$ is an object of $\mathcal{K}$ such that $\supp A$ is discrete. Then for every $x\in \Spc \mathcal{T}^c$ there is a natural map
\begin{displaymath}
\mathit{\Gamma}_x A \to A.
\end{displaymath}
\end{lem}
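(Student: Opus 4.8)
The plan is to realise the map $\mathit{\Gamma}_x A \to A$ as a composite
\begin{displaymath}
\mathit{\Gamma}_x A \xrightarrow{\ \sim\ } \mathit{\Gamma}_{\mathcal{V}(x)}A \to A,
\end{displaymath}
in which the right-hand arrow is the canonical map coming from the acyclization functor $\mathit{\Gamma}_{\mathcal{V}(x)}$ and the left-hand arrow is the inverse of a localization map which discreteness of $\supp A$ will force to be invertible. Since $\Spc\mathcal{T}^c$ is noetherian, $\mathcal{Z}(x)$ is specialization closed and hence Thomason, so the idempotents $\mathit{\Gamma}_{\mathcal{Z}(x)}\mathbf{1}$ and $L_{\mathcal{Z}(x)}\mathbf{1}$ are defined and, by definition, $\mathit{\Gamma}_x\mathbf{1} = \mathit{\Gamma}_{\mathcal{V}(x)}\mathbf{1}\otimes L_{\mathcal{Z}(x)}\mathbf{1}$. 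Tensoring the localization triangle $\mathit{\Gamma}_{\mathcal{Z}(x)}\mathbf{1}\to\mathbf{1}\to L_{\mathcal{Z}(x)}\mathbf{1}\to$ with $\mathit{\Gamma}_{\mathcal{V}(x)}\mathbf{1}$ and then acting on $A$ produces, using exactness of $\square$ and its compatibility with $\otimes$, a natural triangle
\begin{displaymath}
\mathit{\Gamma}_{\mathcal{Z}(x)}\mathit{\Gamma}_{\mathcal{V}(x)}A \to \mathit{\Gamma}_{\mathcal{V}(x)}A \to \mathit{\Gamma}_x A \to
\end{displaymath}
whose middle map is the canonical one and whose left-hand term is the fibre.

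First I would compute $\supp\mathit{\Gamma}_{\mathcal{V}(x)}A = \mathcal{V}(x)\intersec\supp A$, as in \cite{StevensonActions}*{Proposition~5.7}. Since $\mathcal{V}(x)$ is the closure of $\{x\}$ and $\supp A$ is discrete, no point of $\supp A$ other than $x$ itself lies in $\mathcal{V}(x)$, so $\supp\mathit{\Gamma}_{\mathcal{V}(x)}A \cie \{x\}$. The key step is then to observe that the fibre term satisfies
\begin{displaymath}
\supp\mathit{\Gamma}_{\mathcal{Z}(x)}\mathit{\Gamma}_{\mathcal{V}(x)}A \cie \mathcal{Z}(x)\intersec\{x\} = \varnothing,
\end{displaymath}
because $x\in\mathcal{V}(x)$ and hence $x\notin\mathcal{Z}(x)$. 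By the local-to-global principle, which holds under our standing hypotheses, an object of $\mathcal{K}$ with empty support vanishes; thus $\mathit{\Gamma}_{\mathcal{Z}(x)}\mathit{\Gamma}_{\mathcal{V}(x)}A = 0$ and the map $\mathit{\Gamma}_{\mathcal{V}(x)}A \to \mathit{\Gamma}_x A$ in the triangle above is an isomorphism.

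Composing the inverse of this isomorphism with the canonical map $\mathit{\Gamma}_{\mathcal{V}(x)}A \to A$ — the one obtained by acting with $\mathit{\Gamma}_{\mathcal{V}(x)}\mathbf{1}\to\mathbf{1}$ on $A$ — yields the asserted map $\mathit{\Gamma}_x A \to A$, and naturality in $A$ is automatic since every ingredient (the acyclization and localization functors, and the idempotent triangle for $\mathcal{Z}(x)$) is functorial. I do not expect a serious obstacle; the only points requiring care are the bookkeeping around the support formulas — in particular that $\mathcal{V}(x)$ and $\mathcal{Z}(x)$ are both Thomason, which is exactly where noetherianity of $\Spc\mathcal{T}^c$ enters — and being careful to obtain the relevant triangle by acting with the $\mathcal{Z}(x)$-idempotent triangle on $\mathit{\Gamma}_{\mathcal{V}(x)}A$ rather than on $A$ itself, so that discreteness can be brought to bear.
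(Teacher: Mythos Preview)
Your argument is essentially the paper's: show that discreteness forces the natural localization map $\mathit{\Gamma}_{\mathcal{V}(x)}A \to \mathit{\Gamma}_x A$ to be an isomorphism, then precompose its inverse with the canonical acyclization map $\mathit{\Gamma}_{\mathcal{V}(x)}A \to A$. The paper phrases the key step as ``$\mathit{\Gamma}_{\mathcal{V}(x)}A$ lies in $L_{\mathcal{Z}(x)}\mathcal{K}$'', while you phrase it as ``$\mathit{\Gamma}_{\mathcal{Z}(x)}\mathit{\Gamma}_{\mathcal{V}(x)}A$ has empty support''; these are the same observation viewed from the two ends of the $\mathcal{Z}(x)$-localization triangle.

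One small point: your sentence ``no point of $\supp A$ other than $x$ itself lies in $\mathcal{V}(x)$'' invokes discreteness, but discreteness only constrains specialization relations among points of $\supp A$, so this step tacitly assumes $x\in\supp A$. If $x\notin\supp A$ there may well be points of $\supp A$ in $\mathcal{V}(x)$, and then $\mathit{\Gamma}_{\mathcal{V}(x)}A\to\mathit{\Gamma}_x A$ need not be an isomorphism. The paper disposes of this case at the outset by noting that $\mathit{\Gamma}_x A = 0$ when $x\notin\supp A$, so the zero map suffices; adding that one line makes your argument complete.
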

\begin{proof}
We may as well assume $x\in \supp A$ as there is little difficulty in producing a natural map from the zero object to $A$.

There is a natural map
\begin{displaymath}
\mathit{\Gamma}_{\mathcal{V}(x)}A \to A.
\end{displaymath}
As $\supp A$ is discrete we have
\begin{displaymath}
\supp \mathit{\Gamma}_{\mathcal{V}(x)}A = \supp A \intersec \mathcal{V}(x) = \{x\}
\end{displaymath}
(using \cite{StevensonActions}*{Proposition~5.7}) so $\mathit{\Gamma}_{\mathcal{V}(x)}A$ lies in $L_{\mathcal{Z}(x)}\mathcal{K}$. In particular we have natural isomorphisms
\begin{displaymath}
\mathit{\Gamma}_{\mathcal{V}(x)}A \iso L_{\mathcal{Z}(x)}\mathit{\Gamma}_{\mathcal{V}(x)}A \iso \mathit{\Gamma}_x A
\end{displaymath}
from which we deduce the desired natural morphism.
\end{proof}

\begin{rem}\label{rem_lulz}
Let $A$ be as in the lemma. It is clear by the construction of the morphism $\mathit{\Gamma}_x A \to A$ that its image under $\mathit{\Gamma}_x$ is an isomorphism.
\end{rem}

Before stating the decomposition we introduce some notation.

\begin{notation}
Let $\mathcal{L}_i$ be a collection of localizing subcategories of $\mathcal{K}$ indexed by a set $I$. We denote by $\prod_{i\in I} \mathcal{L}_i$ 
the subcategory whose objects are coproducts of objects of the $\mathcal{L}_i$ with componentwise morphisms and triangulated structure (here componentwise means with respect to $I$).
\end{notation}

\begin{prop}\label{prop_discrete_decomp}
Let $W$ be a discrete subset of $\Spc \mathcal{T}^c$. Then the localizing subcategory $\t(W)$ of $\mathcal{K}$ decomposes as
\begin{displaymath}
\prod_{x\in W} \mathit{\Gamma}_x\mathcal{K}
\end{displaymath}
in a way which is compatible with the triangulated structure.
\end{prop}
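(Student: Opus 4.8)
The plan is to show that $\t(W)$, the localizing submodule of objects supported on the discrete set $W$, is precisely the subcategory $\prod_{x\in W}\mathit{\Gamma}_x\mathcal{K}$ of coproducts $\coprod_{x\in W} A_x$ with $A_x\in\mathit{\Gamma}_x\mathcal{K}$, and that this identification respects triangles. The ``$\supseteq$'' inclusion is immediate: each $\mathit{\Gamma}_x\mathcal{K}$ has support contained in $\{x\}\cie W$ (using \cite{StevensonActions}*{Proposition~5.7} or the definition of $\mathit{\Gamma}_x$), and support commutes with coproducts, so any object of $\prod_{x\in W}\mathit{\Gamma}_x\mathcal{K}$ has support in $W$ and thus lies in $\t(W)$. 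The content is the reverse inclusion together with showing the coproduct decomposition is \emph{unique up to canonical isomorphism}, so that the assignment $\coprod A_x \mapsto (A_x)_{x\in W}$ is functorial and the componentwise triangulated structure makes sense.

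\medskip\noindent
For the reverse inclusion, fix $A\in\t(W)$, so $\supp A\cie W$ is discrete. Lemma~\ref{lem_discretemap} supplies, for each $x\in W$, a natural map $\mathit{\Gamma}_x A\to A$; assembling these over the (possibly infinite) set $W$ gives a map
\begin{displaymath}
\varphi_A\colon \coprod_{x\in W}\mathit{\Gamma}_x A \to A.
\end{displaymath}
The claim is that $\varphi_A$ is an isomorphism. Since the action's support theory detects zero objects (local-to-global principle), it suffices to check that $\mathit{\Gamma}_y(\cone\varphi_A)=0$ for every $y\in\Spc\mathcal{T}^c$. Applying the coproduct-preserving functor $\mathit{\Gamma}_y(-) = \mathit{\Gamma}_y\mathbf{1}\square(-)$ to the triangle defining $\cone\varphi_A$, we must show $\mathit{\Gamma}_y\varphi_A$ is an isomorphism. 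If $y\notin\supp A$ both sides vanish (the left side because $\mathit{\Gamma}_y\mathit{\Gamma}_x A$ has support in $\{y\}\intersec\{x\}=\varnothing$ whenever $x\neq y$, and $y\notin\supp A$ kills the $x=y$ term too, all via \cite{StevensonActions}*{Proposition~5.7} and the local-to-global principle). If $y\in\supp A\cie W$: discreteness of $W$ means no $x\in W\setminus\{y\}$ lies in $\mathcal{V}(y)$, so by Lemma~\ref{lem_orthog} (or rather the computation $\supp\mathit{\Gamma}_y\mathit{\Gamma}_x A = \{y\}\intersec\{x\}$) all terms $\mathit{\Gamma}_y\mathit{\Gamma}_x A$ with $x\neq y$ vanish, leaving $\mathit{\Gamma}_y\varphi_A\colon\mathit{\Gamma}_y\mathit{\Gamma}_y A\to\mathit{\Gamma}_y A$, which is an isomorphism by Remark~\ref{rem_lulz} (the image of the map $\mathit{\Gamma}_y A\to A$ under $\mathit{\Gamma}_y$ is an isomorphism). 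Hence $\mathit{\Gamma}_y\varphi_A$ is an isomorphism for all $y$, so $\varphi_A$ is an isomorphism and $A\iso\coprod_{x\in W}\mathit{\Gamma}_x A$ lies in $\prod_{x\in W}\mathit{\Gamma}_x\mathcal{K}$.

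\medskip\noindent
To get the compatibility with the triangulated (and module) structure and the well-definedness of ``componentwise'', I would note that Remark~\ref{rem_totalorthog} gives $\mathit{\Gamma}_x\mathcal{K}\cie\mathit{\Gamma}_y\mathcal{K}^\perp$ for distinct $x,y\in W$, so for $A=\coprod_x A_x$ and $A'=\coprod_x A'_x$ with $A_x,A'_x\in\mathit{\Gamma}_x\mathcal{K}$ one has $\Hom(A,A')\iso\prod_x\Hom(A_x,A'_x)$: indeed $\Hom(A_x,\coprod_{x'}A'_{x'})\iso\Hom(A_x,A'_x)$ by Lemma~\ref{lem_orthog} since the only $x'\in W$ with $x'\in\mathcal{V}(x)$ and contributing is $x'=x$ (discreteness), and then one takes the product over $x$. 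Thus a morphism $A\to A'$ is the same data as a family of morphisms $A_x\to A'_x$, which pins down the identification with $\prod_{x\in W}\mathit{\Gamma}_x\mathcal{K}$ and shows the decomposition $A\iso\coprod_x\mathit{\Gamma}_x A$ is functorial and unique. Finally, given a triangle $A\to A'\to A''\to\Sigma A$ in $\t(W)$, applying the exact coproduct-preserving functor $\mathit{\Gamma}_x$ yields a triangle $\mathit{\Gamma}_x A\to\mathit{\Gamma}_x A'\to\mathit{\Gamma}_x A''\to\Sigma\mathit{\Gamma}_x A$ in $\mathit{\Gamma}_x\mathcal{K}$ for each $x$, and conversely a coproduct over $x\in W$ of triangles in the $\mathit{\Gamma}_x\mathcal{K}$ is a triangle; so the triangles in $\t(W)$ are exactly the componentwise ones, which is the asserted compatibility. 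The main obstacle I anticipate is purely bookkeeping: being careful that the coproduct $\coprod_{x\in W}\mathit{\Gamma}_x A$ is genuinely isomorphic to $A$ rather than merely mapping to it, which is exactly where discreteness of $W$ is essential (it kills all the ``cross terms'' $\mathit{\Gamma}_y\mathit{\Gamma}_x A$ with $x\neq y$), and then verifying that the $\Hom$-splitting of Lemma~\ref{lem_orthog} is natural enough to upgrade the objectwise decomposition to an equivalence of triangulated categories.
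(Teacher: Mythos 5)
Your proof is correct and follows essentially the same route as the paper's: assemble the natural maps from Lemma~\ref{lem_discretemap} into $\coprod_{x\in W}\mathit{\Gamma}_x A \to A$, apply $\mathit{\Gamma}_y$ and use Remark~\ref{rem_lulz} together with discreteness to see the cone has empty support and hence vanishes by the local-to-global principle, and then invoke Lemma~\ref{lem_orthog} for the $\Hom$-splitting. You spell out the ``$\supseteq$'' inclusion and the $y\notin\supp A$ case a bit more explicitly than the paper does, but the key ideas and the lemmas invoked are identical.
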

\begin{proof}
The result is trivial if $W$ consists of fewer than two points, so we may as well assume $W$ is neither empty nor a singleton.

We first show that every object of the localizing submodule $\t(W)$ decomposes in the required fashion. Suppose $A$ is an object of $\mathcal{K}$ with $\supp A \cie W$. Then we have, by Lemma \ref{lem_discretemap}, a natural morphism
\begin{displaymath}
\coprod_{x\in W} \mathit{\Gamma}_x A \to A
\end{displaymath}
whose mapping cone we shall denote by $Z$. It is easily seen, by applying $\mathit{\Gamma}_y$ for any $y\in \Spc \mathcal{T}^c$ to the associated triangle and using the observation in Remark \ref{rem_lulz}, that $\supp Z = \varnothing$. By \cite{StevensonActions}*{Theorem~6.9} the object $Z$ must then be zero and so the natural map from the coproduct is an isomorphism.

To finish the proof it just remains to note that $\t(W)$ is, by definition, triangulated, generated by the $\mathit{\Gamma}_x\mathcal{K}$ for $x\in W$ (by the local-to-global principle), and for any $A,B\in \t(W)$ we have
\begin{align*}
\Hom(A,B) &\iso \Hom(\coprod_{x\in W}\mathit{\Gamma}_x A, \coprod_{y\in W}\mathit{\Gamma}_y B) \\
&\iso \prod_{x\in W} \Hom(\mathit{\Gamma}_x A, \mathit{\Gamma}_x B)
\end{align*}
by Lemma \ref{lem_orthog} and discreteness of $W$. Thus not only does every object of $\t(W)$ decompose in the required way but the non-zero morphisms only occur between the components corresponding to identical points. Hence the triangulated structure inherited from $\mathcal{K}$ is the componentwise one.
\end{proof}

\section{Filtrations by supports}\label{sec_filt}
We continue the notation of the previous section: $\mathcal{T}$ is a rigidly-compactly generated tensor triangulated category with a monoidal model and noetherian spectrum, and $\mathcal{K}$ is a compactly generated triangulated category on which $\mathcal{T}$ acts.

Let us first recall the notion of a dimension function as in \cite{BaFilt}*{Definition~3.1}. Let $\int\un\{\pm \infty\}$ be the ordered set where we just extend the usual order on $\int$ by $-\infty < d < \infty$ for all $d\in \int$.

\begin{defn}\label{defn_dimf}
A \emph{dimension function} on $\mathcal{T}$ is a function $\dim\colon \Spc\mathcal{T}^c \to \int\un\{\pm \infty\}$ satisfying:
\begin{itemize}
\item[(1)] if $x\in \mathcal{V}(y)$ then $\dim x \leq \dim y$;
\item[(2)] if $x\in \mathcal{V}(y)$ and $\dim x = \dim y$ is finite then $x=y$.
\end{itemize}
\end{defn}

Given a dimension function and $d\in \int\un\{\pm \infty\}$ we set
\begin{displaymath}
\mathcal{H}_{\leq d} = \{x\in \Spc \mathcal{T}^c \; \vert \; \dim x \leq d\}.
\end{displaymath}
By condition (1) above $\mathcal{H}_{\leq d}$ is specialization closed. We thus have associated acyclization and localization functors on $\mathcal{K}$, via the action of $\mathcal{T}$, we which denote by $\mathit{\Gamma}_{\leq d}$ and $L_{\leq d}$ respectively. We also wish to consider the subsets
\begin{displaymath}
\mathcal{H}_d = \{x\in \Spc\mathcal{T}^c \; \vert \; \dim x = d\}.
\end{displaymath}
Let us begin with some simple observations. For $d'\leq d$ in $\int\un\{\pm \infty\}$ there is a containment of subsets $\mathcal{H}_{\leq d'} \cie \mathcal{H}_{\leq d}$. This gives rise to a corresponding inclusion of the associated smashing subcategories and hence a natural transformation
\begin{displaymath}
\mathit{\Gamma}_{\leq d'} \to \mathit{\Gamma}_{\leq d}.
\end{displaymath}
There is an evident filtration of $\mathcal{K}$ by these smashing subcategories
\begin{displaymath}
0 \cie \mathit{\Gamma}_{\leq -\infty}\mathcal{K} \cie \cdots \cie \mathit{\Gamma}_{\leq d-1}\mathcal{K} \cie \mathit{\Gamma}_{\leq d}\mathcal{K} \cie \cdots \cie \mathit{\Gamma}_{\leq \infty}\mathcal{K} = \mathcal{K}.
\end{displaymath}
Let us now consider the coproduct preserving functor $\mathit{\Gamma}_d = \mathit{\Gamma}_{\leq d} L_{\leq {d-1}}$ (note that $\mathit{\Gamma}_{\leq d}$ and $L_{\leq {d-1}}$ commute up to natural isomorphism). For any object $A$ of $\mathcal{K}$ we have a localization triangle
\begin{displaymath}
\mathit{\Gamma}_{\leq d-1}A \to \mathit{\Gamma}_{\leq d}A \to \mathit{\Gamma}_dA \to \S\mathit{\Gamma}_{\leq d-1}A.
\end{displaymath}
The functor $\mathit{\Gamma}_d$ realises the localizing submodule $\t(\mathcal{H}_d)$, where $\t$ is as in \ref{eq_st}.

\begin{lem}
The essential image of $\mathit{\Gamma}_d$, denoted $\mathit{\Gamma}_d\mathcal{K}$, is precisely the localizing submodule of objects whose support lies in $\mathcal{H}_d$ i.e.\ $\t(\mathcal{H}_d) = \mathit{\Gamma}_{d}\mathcal{K}$.
\end{lem}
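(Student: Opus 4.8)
The plan is to show the two inclusions $\mathit{\Gamma}_d\mathcal{K} \cie \t(\mathcal{H}_d)$ and $\t(\mathcal{H}_d) \cie \mathit{\Gamma}_d\mathcal{K}$ separately, using the support formula for the idempotents associated to specialization closed subsets.

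First I would establish the support computation $\supp \mathit{\Gamma}_d A = \supp A \intersec \mathcal{H}_d$ for every $A$ in $\mathcal{K}$. For this, recall from the theory in \cite{StevensonActions}*{Proposition~5.7} (or the corresponding statement about acyclization/localization functors for specialization closed subsets) that $\supp \mathit{\Gamma}_{\leq d} A = \supp A \intersec \mathcal{H}_{\leq d}$ and $\supp L_{\leq d-1} A = \supp A \intersec (\Spc\mathcal{T}^c \setminus \mathcal{H}_{\leq d-1})$. Since $\mathit{\Gamma}_d = \mathit{\Gamma}_{\leq d} L_{\leq d-1}$ and these functors commute up to natural isomorphism, iterating gives $\supp \mathit{\Gamma}_d A = \supp A \intersec \mathcal{H}_{\leq d} \intersec (\Spc\mathcal{T}^c \setminus \mathcal{H}_{\leq d-1}) = \supp A \intersec \mathcal{H}_d$, where the last equality is just the observation that $\mathcal{H}_d = \mathcal{H}_{\leq d} \setminus \mathcal{H}_{\leq d-1}$. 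In particular $\supp \mathit{\Gamma}_d A \cie \mathcal{H}_d$, which gives the inclusion $\mathit{\Gamma}_d\mathcal{K} \cie \t(\mathcal{H}_d)$ (and shows, en route, that $\mathit{\Gamma}_d\mathcal{K}$ is indeed a localizing submodule, being the essential image of a coproduct-preserving exact functor built from idempotents for the action).

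For the reverse inclusion, suppose $A \in \t(\mathcal{H}_d)$, i.e.\ $\supp A \cie \mathcal{H}_d$. I would apply $\mathit{\Gamma}_d$ to the localization triangle $\mathit{\Gamma}_{\leq d-1}A \to \mathit{\Gamma}_{\leq d}A \to \mathit{\Gamma}_dA \to \S\mathit{\Gamma}_{\leq d-1}A$, or more directly argue as follows. Since $\supp A \cie \mathcal{H}_{\leq d}$, the localization triangle $\mathit{\Gamma}_{\leq d}A \to A \to L_{\leq d}A \to$ has $\supp L_{\leq d}A = \supp A \intersec (\Spc\mathcal{T}^c\setminus\mathcal{H}_{\leq d}) = \varnothing$, so $L_{\leq d}A = 0$ by the fact (noted after \cite{StevensonActions}*{Theorem~6.9}) that the support detects vanishing; hence $\mathit{\Gamma}_{\leq d}A \iso A$. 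Similarly, since $\supp A \intersec \mathcal{H}_{\leq d-1} = \supp A \intersec \mathcal{H}_d \intersec \mathcal{H}_{\leq d-1} = \varnothing$, the triangle $\mathit{\Gamma}_{\leq d-1}A \to A \to L_{\leq d-1}A \to$ shows $\mathit{\Gamma}_{\leq d-1}A = 0$ and so $L_{\leq d-1}A \iso A$. Combining, $\mathit{\Gamma}_d A = \mathit{\Gamma}_{\leq d}L_{\leq d-1}A \iso \mathit{\Gamma}_{\leq d}A \iso A$, so $A$ lies in the essential image of $\mathit{\Gamma}_d$.

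I do not anticipate a serious obstacle here: the statement is essentially a bookkeeping exercise once one has the support formulas for $\mathit{\Gamma}_{\leq d}$ and $L_{\leq d}$ and the vanishing criterion for support. The one point requiring a little care is confirming that the support behaves correctly under composition of $\mathit{\Gamma}_{\leq d}$ with $L_{\leq d-1}$ — i.e.\ that these smashing/cosmashing operations on $\mathcal{K}$ intersect supports as expected — but this is exactly the content of the support theory developed in \cite{StevensonActions} for specialization closed subsets, and the commutation $\mathit{\Gamma}_{\leq d}L_{\leq d-1} \iso L_{\leq d-1}\mathit{\Gamma}_{\leq d}$ was already recorded in the text preceding the lemma.
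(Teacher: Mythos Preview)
Your proof is correct and follows essentially the same route as the paper: both use \cite{StevensonActions}*{Proposition~5.7} to get $\mathit{\Gamma}_d\mathcal{K} \cie \t(\mathcal{H}_d)$ from the support formula, and then, for $A$ with $\supp A \cie \mathcal{H}_d$, use the localization triangles together with the vanishing criterion to show $\mathit{\Gamma}_{\leq d}A \iso A$ and $L_{\leq d-1}A \iso A$, hence $\mathit{\Gamma}_d A \iso A$. Your writeup is slightly more explicit about the intermediate support computation $\supp \mathit{\Gamma}_d A = \supp A \cap \mathcal{H}_d$, but the substance is identical.
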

\begin{proof}
It follows immediately from the properties of the support, in particular \cite{StevensonActions}*{Proposition~5.7~(4)}, that $\mathit{\Gamma}_d\mathcal{K} \cie \t(\mathcal{H}_d)$. On the other hand suppose $A\in \mathcal{K}$ is an object of $\t(\mathcal{H}_d)$ i.e., the support of $A$ is contained in $\mathcal{H}_d$. Consider the localization triangle
\begin{displaymath}
\mathit{\Gamma}_{\leq d}A \to A \to L_{\leq d}A \to \S \mathit{\Gamma}_{\leq d}.
\end{displaymath}
Again using \cite{StevensonActions}*{Proposition~5.7~(4)} we deduce that $\supp L_{\leq d}A$ is empty and so $L_{\leq d}A$ must be zero. Hence $\mathit{\Gamma}_{\leq d}A \iso A$. The analogous argument for $L_{\leq d-1}$ shows that $L_{\leq d-1} A \iso A$ and so we deduce that $\mathit{\Gamma}_d A \iso A$ as desired.
\end{proof}

Hence we obtain a tower of triangles for $A$ relative to the given dimension function
\begin{displaymath}
\xymatrix{
\cdots \ar[r] &\mathit{\Gamma}_{\leq d-2}A \ar[r] & \mathit{\Gamma}_{\leq d-1}A \ar[r] \ar[d] & \mathit{\Gamma}_{\leq d}A \ar[r] \ar[d] & \cdots &\\
&& A_{d-1} \ar[ul]^{\S} & A_{d} \ar[ul]^{\S}
}
\end{displaymath}
whose $d$th factor $A_d = \mathit{\Gamma}_dA$ lies in $\mathit{\Gamma}_d\mathcal{K} = \t(\mathcal{H}_d)$. Furthermore, this tower is functorial in $A$.

The factors $A_d$ admit a decomposition into local pieces.

\begin{lem}\label{lem_disc_decomp}
For $d\in \int$ the subset $\mathcal{H}_d$ is discrete, so there are equalities of subcategories
\begin{displaymath}
\mathit{\Gamma}_d\mathcal{K} = \t(\mathcal{H}_d) = \prod_{x\in \mathcal{H}_d} \mathit{\Gamma}_x \mathcal{K}.
\end{displaymath}
\end{lem}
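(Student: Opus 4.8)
The plan is to verify the two claims separately, with the first---that $\mathcal{H}_d$ is discrete---being the genuinely new point, while the second equality is then an immediate citation of Proposition~\ref{prop_discrete_decomp}.

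First I would prove discreteness of $\mathcal{H}_d$ directly from the defining properties of a dimension function. Suppose $x, y \in \mathcal{H}_d$ with $x \in \mathcal{V}(y)$; I must show $x = y$. By condition~(1) of Definition~\ref{defn_dimf} we have $\dim x \leq \dim y$, but both equal $d$, so $\dim x = \dim y = d$. Since $d \in \int$, this common value is finite, so condition~(2) of Definition~\ref{defn_dimf} forces $x = y$. By symmetry the same argument handles $y \in \mathcal{V}(x)$. Hence there are no non-trivial specialization relations among points of $\mathcal{H}_d$, which is precisely the definition of discreteness. Note this is exactly where the finiteness hypothesis $d \in \int$ (as opposed to $d = \pm\infty$) is used: at $d = \pm\infty$ condition~(2) gives nothing and $\mathcal{H}_{\pm\infty}$ need not be discrete.

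With discreteness in hand, I would simply observe that $\mathcal{H}_d$ is a discrete subset of $\Spc \mathcal{T}^c$ in the sense required by Proposition~\ref{prop_discrete_decomp}, and apply that proposition with $W = \mathcal{H}_d$ to obtain
\begin{displaymath}
\t(\mathcal{H}_d) = \prod_{x\in \mathcal{H}_d} \mathit{\Gamma}_x\mathcal{K}
\end{displaymath}
as triangulated subcategories of $\mathcal{K}$. The remaining equality $\mathit{\Gamma}_d\mathcal{K} = \t(\mathcal{H}_d)$ is precisely the content of the preceding (unnumbered) lemma identifying the essential image of $\mathit{\Gamma}_d$ with $\t(\mathcal{H}_d)$, so chaining the two gives the displayed string of equalities.

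I do not anticipate any real obstacle here: the proof is essentially bookkeeping, assembling Definition~\ref{defn_dimf}, Proposition~\ref{prop_discrete_decomp}, and the identification of $\mathit{\Gamma}_d\mathcal{K}$. If anything needs care, it is only making explicit that the decomposition of Proposition~\ref{prop_discrete_decomp} is stated for $\t(W)$ and that we are legitimately allowed to feed in $W = \mathcal{H}_d$ because it has just been shown discrete; one should also recall that the local-to-global principle (in force under our standing hypotheses) is what makes $\mathit{\Gamma}_d\mathcal{K}$ generated by the $\mathit{\Gamma}_x\mathcal{K}$, but this was already built into the proof of Proposition~\ref{prop_discrete_decomp} and need not be re-derived.
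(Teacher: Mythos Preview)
Your proposal is correct and follows essentially the same approach as the paper: verify discreteness of $\mathcal{H}_d$ via condition~(2) of Definition~\ref{defn_dimf} (using that $d$ is finite), then invoke Proposition~\ref{prop_discrete_decomp}. The paper's proof is simply terser, omitting the explicit unpacking of the specialization argument and the reference to the preceding lemma for $\mathit{\Gamma}_d\mathcal{K} = \t(\mathcal{H}_d)$.
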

\begin{proof}
That $\mathcal{H}_d$ is discrete is immediate from (2) of Definition \ref{defn_dimf}. The second statement then follows from Proposition \ref{prop_discrete_decomp}.
\end{proof}

The following theorem summarises the observations we have thus far made.

\begin{thm}\label{thm_gen_filt}
Suppose $\mathcal{T}$ is a rigidly-compactly generated tensor triangulated category whose compacts have noetherian spectrum and which can be presented as the homotopy category of a stable monoidal model category. Let $\mathcal{K}$ be a compactly generated triangulated category on which $\mathcal{T}$ acts. Then given a dimension function $\dim$ on $\Spc\mathcal{T}^c$ the functors $\mathit{\Gamma}_{\leq d}$ for $d\in \int\cup\{\pm \infty\}$, associated to the dimension function, yield a filtration of $\mathcal{K}$ by localizing submodules
\begin{displaymath}
\cdots \cie \mathit{\Gamma}_{\leq d}\mathcal{K} \cie \mathit{\Gamma}_{\leq d+1}\mathcal{K} \cie \cdots \cie \mathit{\Gamma}_{\leq \infty}\mathcal{K} = \mathcal{K}.
\end{displaymath}
For $d\in \int$ the factors in this filtration are, up to equivalence, the subcategories
\begin{displaymath}
\prod_{x\in \mathcal{H}_d}\mathit{\Gamma}_x\mathcal{K}.
\end{displaymath}
Thus each $A$ in $\mathcal{K}$ has a unique (up to isomorphism) functorial tower
\begin{displaymath}
\xymatrix{
\cdots \ar[r] &\mathit{\Gamma}_{\leq d-2}A \ar[r] & \mathit{\Gamma}_{\leq d-1}A \ar[r] \ar[d] & \mathit{\Gamma}_{\leq d}A \ar[r] \ar[d] & \cdots &\\
&& A_{d-1} \ar[ul]^{\S} & A_{d} \ar[ul]^{\S}
}
\end{displaymath}
such that, for $d\in \int$
\begin{displaymath}
A_d \iso \coprod_{x\in \mathcal{H}_d} \mathit{\Gamma}_xA.
\end{displaymath}
\end{thm}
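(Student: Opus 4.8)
The plan is to assemble the statement from the pieces already established, since each clause has essentially been proved above. First I would record the filtration: for $d' \leq d$ we have $\mathcal{H}_{\leq d'} \cie \mathcal{H}_{\leq d}$, both specialization closed by condition (1) of Definition~\ref{defn_dimf}, and the standard formalism of acyclization functors for the action gives the inclusion $\mathit{\Gamma}_{\leq d'}\mathcal{K} \cie \mathit{\Gamma}_{\leq d}\mathcal{K}$ of smashing submodules, with $\mathit{\Gamma}_{\leq \infty}\mathcal{K} = \mathcal{K}$ because $\mathcal{H}_{\leq \infty} = \Spc \mathcal{T}^c$ and the support detects the zero object. That these subcategories are localizing \emph{submodules} (not merely localizing subcategories) follows from the fact that $\mathit{\Gamma}_{\leq d}\mathbf{1}$ is a tensor idempotent, so $\mathit{\Gamma}_{\leq d}\mathcal{K} = \t(\mathcal{H}_{\leq d})$ is closed under the action of $\mathcal{T}$.

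Next I would identify the factors. The localization triangle $\mathit{\Gamma}_{\leq d-1}A \to \mathit{\Gamma}_{\leq d}A \to \mathit{\Gamma}_dA \to \S\mathit{\Gamma}_{\leq d-1}A$ defines $A_d := \mathit{\Gamma}_d A = \mathit{\Gamma}_{\leq d}L_{\leq d-1}A$, and the preceding lemma shows $\mathit{\Gamma}_d\mathcal{K} = \t(\mathcal{H}_d)$. The assembly of these triangles into the displayed tower is formal: the maps $\mathit{\Gamma}_{\leq d-1}\mathcal{K} \to \mathit{\Gamma}_{\leq d}\mathcal{K}$ are the natural transformations coming from the inclusion of subsets, so the octahedral axiom (or simply the functoriality of localization triangles) produces the connecting maps $A_d \to \S\mathit{\Gamma}_{\leq d-1}A$ making the diagram commute, and everything is functorial in $A$ because each $\mathit{\Gamma}_{\leq d}$, $L_{\leq d-1}$, hence $\mathit{\Gamma}_d$, is a functor. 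Uniqueness up to isomorphism of the tower follows from uniqueness of cones and of the acyclization/localization functors attached to a specialization closed subset.

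For the final displayed isomorphism $A_d \iso \coprod_{x\in \mathcal{H}_d}\mathit{\Gamma}_xA$, I would invoke Lemma~\ref{lem_disc_decomp}: $\mathcal{H}_d$ is discrete by condition (2) of Definition~\ref{defn_dimf}, so $\t(\mathcal{H}_d) = \prod_{x\in \mathcal{H}_d}\mathit{\Gamma}_x\mathcal{K}$ by Proposition~\ref{prop_discrete_decomp}. Since $A_d \in \t(\mathcal{H}_d)$, the decomposition proved inside Proposition~\ref{prop_discrete_decomp} gives $A_d \iso \coprod_{x\in \mathcal{H}_d}\mathit{\Gamma}_x A_d$, and it remains to check $\mathit{\Gamma}_x A_d \iso \mathit{\Gamma}_x A$ for $x\in \mathcal{H}_d$. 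This is where I expect the only real bookkeeping: one applies $\mathit{\Gamma}_x = \mathit{\Gamma}_{\mathcal{V}(x)}L_{\mathcal{Z}(x)}$ to the two defining triangles (the one for $\mathit{\Gamma}_{\leq d}A$ and the one for $L_{\leq d-1}A$) and uses that for $x$ with $\dim x = d$ one has $\mathcal{V}(x)\subseteq \mathcal{H}_{\leq d}$ while $x\notin \mathcal{H}_{\leq d-1}$, so $\mathit{\Gamma}_x\mathit{\Gamma}_{\leq d-1}A = 0$ and $\mathit{\Gamma}_x L_{\leq d}A = 0$; this forces $\mathit{\Gamma}_x\mathit{\Gamma}_{\leq d}A \iso \mathit{\Gamma}_x A$ and $\mathit{\Gamma}_x L_{\leq d-1}A \iso \mathit{\Gamma}_x A$, and composing gives $\mathit{\Gamma}_x A_d \iso \mathit{\Gamma}_x A$. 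The main obstacle, such as it is, is thus purely the verification that the localization functors for the nested specialization closed subsets interact correctly with the pointwise functors $\mathit{\Gamma}_x$; once that is in hand, everything else is a restatement of the lemmas and propositions already established.
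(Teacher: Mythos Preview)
Your proposal is correct and takes essentially the same approach as the paper: the paper explicitly presents the theorem as a summary of the observations already made (the filtration, the localization triangles defining $A_d = \mathit{\Gamma}_d A$, the lemma identifying $\mathit{\Gamma}_d\mathcal{K} = \t(\mathcal{H}_d)$, and Lemma~\ref{lem_disc_decomp}), so there is no separate proof environment. Your extra bookkeeping verifying $\mathit{\Gamma}_x A_d \iso \mathit{\Gamma}_x A$ for $x\in \mathcal{H}_d$ is correct and is precisely the detail the paper leaves implicit.
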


This has the pleasant consequence of describing, in some sense, how to assemble an object from local pieces. Our hypotheses are such that the local-to-global principle holds so one knows that, for $A\in \mathcal{K}$, the localizing submodule generated by the $\mathit{\Gamma}_xA$ for $x\in \Spc\mathcal{T}^c$ contains $A$. Given a dimension function, such that the filtration on $\mathcal{K}$ is bounded below, the tower of the theorem provides us with a sequence of canonical triangles expressing a way of building $A$. In fact it evidently gives the following slight sharpening of the local-to-global principle (cf.\ \cite{BIKStrat2}*{Theorem~3.4}).

\begin{cor}
Suppose $\dim$ is a dimension function on $\Spc \mathcal{T}^c$ and $A$ is an object of $\mathcal{K}$. If there exists a $d_0\in \int$ such that $\mathit{\Gamma}_{\leq d_0}A = 0$ and no $x\in \Spc \mathcal{T}^c$ satisfies $\dim(x) = \infty$ then
\begin{displaymath}
A \in \langle \mathit{\Gamma}_xA \; \vert \; x\in \Spc \mathcal{T}^c \rangle_\mathrm{loc}.
\end{displaymath}
The conclusion also holds if there exist $d_0, d_1\in \int$ with $\mathit{\Gamma}_{\leq d_0}A = 0$ and $\mathit{\Gamma}_{\leq d_1}A \iso A$.
\end{cor}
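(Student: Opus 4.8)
The plan is to extract everything we need from the functorial tower provided by Theorem~\ref{thm_gen_filt}. The key point is that the tower is built from localization triangles
\begin{displaymath}
\mathit{\Gamma}_{\leq d-1}A \to \mathit{\Gamma}_{\leq d}A \to A_d \to \S\mathit{\Gamma}_{\leq d-1}A
\end{displaymath}
and that, by Lemma~\ref{lem_disc_decomp}, each factor $A_d$ is a coproduct $\coprod_{x\in \mathcal{H}_d}\mathit{\Gamma}_xA$. So first I would observe that each $A_d$ lies in $\langle \mathit{\Gamma}_xA \mid x\in \Spc\mathcal{T}^c\rangle_\mathrm{loc}$, since a localizing subcategory is closed under coproducts; call this subcategory $\mathcal{S}$.

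Next, under the first hypothesis, $\mathit{\Gamma}_{\leq d_0}A = 0$ forces $\mathit{\Gamma}_{\leq d}A = 0$ for all $d\le d_0$ (since the natural transformations $\mathit{\Gamma}_{\leq d'}\to\mathit{\Gamma}_{\leq d}$ factor appropriately, or directly because $\mathcal{H}_{\leq d}\cie\mathcal{H}_{\leq d_0}$), and the absence of points with $\dim x = \infty$ means that $\mathit{\Gamma}_{\leq \infty} = \mathit{\Gamma}_{\leq N}$ restricted to the relevant supports; more precisely $\mathcal{H}_{\leq\infty} = \Spc\mathcal{T}^c = \bigcup_{d} \mathcal{H}_{\leq d}$ with $\dim$ finite everywhere. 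I would then argue, using an ascending induction on $d$ starting from $\mathit{\Gamma}_{\leq d_0}A = 0$, that each $\mathit{\Gamma}_{\leq d}A$ lies in $\mathcal{S}$: the inductive step reads off the localization triangle above, with $\mathit{\Gamma}_{\leq d-1}A\in\mathcal{S}$ by hypothesis and $A_d\in\mathcal{S}$ by the previous paragraph, so $\mathit{\Gamma}_{\leq d}A\in\mathcal{S}$ since $\mathcal{S}$ is triangulated. Finally $\mathit{\Gamma}_{\leq\infty}A \iso A$ because $\supp(L_{\leq\infty}A)\cie\Spc\mathcal{T}^c\setminus\mathcal{H}_{\leq\infty} = \varnothing$, whence $L_{\leq\infty}A = 0$ by the local-to-global principle; but with all dimensions finite we need to promote the conclusion past any finite stage. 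This is exactly where the boundedness discussion in the paragraph before the corollary is invoked: the union $\bigcup_d\mathit{\Gamma}_{\leq d}\mathcal{K}$ need not be all of $\mathcal{K}$, so the cleanest route is to note that $\{x : \dim x \le d\}$ for varying $d$ exhausts $\Spc\mathcal{T}^c$ and that $A$ is, up to the bounded-below tower, a homotopy colimit of the $\mathit{\Gamma}_{\leq d}A$; but under the stated hypotheses we can avoid homotopy colimits entirely by observing that $\supp A$ is a (noetherian, hence with finitely many generic points per irreducible component) subset on which $\dim$ is bounded above on each relevant piece — actually the slickest argument is the second hypothesis, so I would do that case first and then reduce.

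For the second hypothesis the argument is immediate and self-contained: $\mathit{\Gamma}_{\leq d_1}A\iso A$ and $\mathit{\Gamma}_{\leq d_0}A = 0$ with $d_0 \le d_1$ (we may assume this, replacing $d_0$ by $\min(d_0,d_1)$ since $\mathit{\Gamma}_{\leq d}A = 0$ persists downward), so a finite induction on $d$ from $d_0$ to $d_1$ using the localization triangles shows $A \iso \mathit{\Gamma}_{\leq d_1}A\in\mathcal{S}$. I would present this as the main argument. For the first hypothesis I would then note that if no point has infinite dimension and $\mathit{\Gamma}_{\leq d_0}A = 0$, then setting $W = \supp A$ (a subset of $\Spc\mathcal{T}^c$ on which $\dim$ takes only finite values) one has $\mathit{\Gamma}_{\leq\infty}A\iso A$, and then reduce to the previous case provided $\dim$ is bounded above on $W$ — if it is not, one genuinely needs the homotopy-colimit description, $A\iso\hocolim_d \mathit{\Gamma}_{\leq d}A$, together with the fact that $\mathcal{S}$, being localizing, is closed under homotopy colimits of sequences. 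So the honest structure is: prove the hocolim statement $A\iso\hocolim_d\mathit{\Gamma}_{\leq d}A$ under "$\mathit{\Gamma}_{\leq d_0}A = 0$ and no infinite dimensions" (its cone has empty support), then invoke closure of $\mathcal{S}$ under such hocolims.

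The main obstacle is precisely this last point: ensuring $A\iso\hocolim_d\mathit{\Gamma}_{\leq d}A$ under the first hypothesis. The content is that the cone $C$ of the natural map $\hocolim_d\mathit{\Gamma}_{\leq d}A\to A$ has $\mathit{\Gamma}_xC = 0$ for every $x$; since $\mathit{\Gamma}_x$ commutes with coproducts (hence with the hocolim) and $\mathit{\Gamma}_x\mathit{\Gamma}_{\leq d}A\iso\mathit{\Gamma}_xA$ for all $d\ge\dim x$ (finite by hypothesis), the map $\mathit{\Gamma}_x(\hocolim_d\mathit{\Gamma}_{\leq d}A)\to\mathit{\Gamma}_xA$ is an isomorphism, so $\supp C = \varnothing$ and $C = 0$ by the local-to-global principle. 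Everything else is a routine induction up the tower plus the standing fact that localizing subcategories are closed under coproducts, shifts, cones, and countable homotopy colimits.
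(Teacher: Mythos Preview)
Your argument is correct and follows essentially the same route as the paper: induct up the tower using the localization triangles and the fact that each $A_d \iso \coprod_{x\in\mathcal{H}_d}\mathit{\Gamma}_xA$ lies in $\mathcal{S}$, handle the bounded case by a finite induction from $d_0$ to $d_1$, and for the unbounded case use $A \iso \hocolim_d \mathit{\Gamma}_{\leq d}A$ together with closure of $\mathcal{S}$ under homotopy colimits. The only difference is cosmetic: the paper invokes \cite{StevensonActions}*{Lemma~6.6} for the hocolim identification, whereas you supply the direct support-theoretic argument (apply $\mathit{\Gamma}_x$ to the cone and use that $\mathit{\Gamma}_x\mathit{\Gamma}_{\leq d}A \iso \mathit{\Gamma}_xA$ once $d\geq \dim x$), which is exactly what that lemma does.
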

\begin{proof}
If there exist $d_0$ and $d_1$ as in the second statement then there is a $d_-$ such that $\mathit{\Gamma}_{\leq d_-}A = \mathit{\Gamma}_{d_-}A$. As each $A_d$ lies in $\langle \mathit{\Gamma}_xA \; \vert \; x\in \Spc \mathcal{T}^c \rangle_\mathrm{loc}$ it follows that we can build $\mathit{\Gamma}_{\leq d}A$ for all $d\in \int$. This proves the second statement of the corollary.

To complete the proof of the first statement we note that, as every point of $\Spc \mathcal{T}^c$ has finite dimension, we have $\Spc \mathcal{T}^c = \cup_{d<\infty} \mathcal{H}_d$. Thus it follows from \cite{StevensonActions}*{Lemma~6.6} that $A \iso \hocolim_{d< \infty} \mathit{\Gamma}_{\leq d} A$. So, combining this with what we have already shown, $A$ is a homotopy colimit of objects of $\langle \mathit{\Gamma}_xA \; \vert \; x\in \Spc \mathcal{T}^c \rangle_\mathrm{loc}$ and hence also lies in this localizing subcategory.
\end{proof}

\section{Filtrations of singularity categories}\label{ssec_filtrations}
In this section we give a first example of the machinery developed above by applying it to singularity categories. Let $X$ be a noetherian separated scheme and denote by $\SS(X)$ the homotopy category of acyclic complexes of injective quasi-coherent $\str_X$-modules as in \cite{KrStab}. This category carries an action of $\D(X)$, the derived category of quasi-coherent $\str_X$-modules, in the sense of \cite{StevensonActions}: there is a functor
\begin{displaymath}
\D(X) \times \SS(X) \stackrel{\odot}{\to} \SS(X)
\end{displaymath}
which is exact and coproduct preserving in each variable as well as satisfying compatibility conditions which allow one to view $\SS(X)$ as a $\D(X)$-module (details can be found in \cite{Stevensonclass}*{Section~3}). This action gives rise to supports valued in $\Sing X = \s \SS(X)$ the singular locus of $X$ i.e., those points $x\in X$ such that $\str_{X,x}$ is not regular (see \cite{Stevensonclass}*{Lemma~7.4} for the computation of $\s \SS(X)$).


The dimension function on $X = \Spc\D(X)^c$ we will be concerned with is given by $\dim(x) = \dim_\mathrm{Krull}(\mathcal{V}(x))$ the Krull dimension of the closure of $x$. From Theorem \ref{thm_gen_filt} we obtain an associated filtration of $\SS(X)$.

\begin{thm}\label{thm_filt}
The functors $\mathit{\Gamma}_{\leq d}$ for $0\leq d \leq \dim X$, associated to the Krull dimension function, endow $\SS(X)$ with a filtration by localizing submodules
\begin{displaymath}
0 = \mathit{\Gamma}_{\leq -1}\mathcal{K} \cie \mathit{\Gamma}_{\leq 0}\mathcal{K} \cie \mathit{\Gamma}_{\leq d}\SS(X) \cie \mathit{\Gamma}_{\leq d+1}\SS(X) \cie \cdots \cie \mathit{\Gamma}_{\leq \infty}\SS(X) = \SS(X)
\end{displaymath}
whose factors are up to equivalence the subcategories
\begin{displaymath}
\prod_{x\in \mathcal{H}_d}\mathit{\Gamma}_x\SS(X)
\end{displaymath}
In particular, if $\Sing X$ has finite Krull dimension we get a finite filtration
\begin{displaymath}
0  \cie \mathit{\Gamma}_{\leq 0}\SS(X) \cie \mathit{\Gamma}_{\leq 1}\SS(X) \cie \cdots \cie \mathit{\Gamma}_{\leq n-1}\SS(X) \cie \mathit{\Gamma}_{\leq n}\SS(X) = \SS(X)
\end{displaymath}
so that each $A$ in $\SS(X)$ is equipped with a unique (up to isomorphism), finite, and functorial tower
\begin{displaymath}
\xymatrix{
0 \ar[r] & \mathit{\Gamma}_{\leq 0}A \ar[r] \ar[d] & \mathit{\Gamma}_{\leq 1}A \ar[r] \ar[d] & \cdots \ar[r] & \mathit{\Gamma}_{\leq n-1}A \ar[r]  & \mathit{\Gamma}_{\leq n}A = A \ar[d] \\
& A_0 \ar[ul]^{\S} & A_{1} \ar[ul]^{\S} &&& A_n \ar[ul]^{\S}
}
\end{displaymath}
such that the $A_d$ live in
\begin{displaymath}
\mathit{\Gamma}_d\SS(X) = \prod_{x\in \mathcal{H}_d} \mathit{\Gamma}_x \SS(X).
\end{displaymath}
\end{thm}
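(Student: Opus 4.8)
The plan is to realise the action of $\D(X)$ on $\SS(X)$ as an instance of the framework of Theorem~\ref{thm_gen_filt}, to verify that $x\mapsto \dim_\mathrm{Krull}(\mathcal{V}(x))$ is a dimension function in the sense of Definition~\ref{defn_dimf}, and then to read off the filtration, the factors, and the tower of triangles directly from Theorem~\ref{thm_gen_filt}, using at the end the fact that every object of $\SS(X)$ has support inside $\Sing X$ to pin down both ends of the filtration.

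First I would check the standing hypotheses of Sections~\ref{sec_disc} and~\ref{sec_filt}. For the noetherian separated scheme $X$, the category $\D(X)$ of quasi-coherent $\str_X$-modules is rigidly-compactly generated: its compact objects are the perfect complexes, which are rigid for the derived tensor product, and $\mathbf{1} = \str_X$ is perfect. It is the homotopy category of a stable monoidal model category (for instance a suitable model structure on unbounded complexes of quasi-coherent sheaves). By the Thomason--Neeman classification of thick $\otimes$-ideals together with Balmer's reconstruction theorem one has $\Spc\D(X)^c \iso X$, which is noetherian because $X$ is. The action $\D(X)\times\SS(X)\stackrel{\odot}{\longrightarrow}\SS(X)$ and the identity $\s\SS(X) = \Sing X$ are recalled above from \cite{StevensonActions} and \cite{Stevensonclass}*{Lemma~7.4}; in particular the local-to-global principle holds and the support detects the zero object, so all hypotheses in force throughout Sections~\ref{sec_disc} and~\ref{sec_filt} are satisfied.

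Next I would verify the two axioms of Definition~\ref{defn_dimf} for $\dim(x) = \dim_\mathrm{Krull}(\mathcal{V}(x))$. If $x\in\mathcal{V}(y)$ then $\mathcal{V}(x)\cie\mathcal{V}(y)$, whence $\dim x \leq \dim y$; this is (1). For (2), if in addition $\dim x = \dim y$ is finite, then $\mathcal{V}(x)\cie\mathcal{V}(y)$ is a containment of irreducible closed subsets of equal finite Krull dimension; since a proper inclusion of irreducible closed sets strictly raises the dimension, this forces $\mathcal{V}(x) = \mathcal{V}(y)$, and hence $x = y$, each being the generic point of this set. With both axioms in hand, Theorem~\ref{thm_gen_filt} applies verbatim and supplies the filtration of $\SS(X)$ by the localizing submodules $\mathit{\Gamma}_{\leq d}\SS(X)$, with $d$th factor equivalent to $\prod_{x\in\mathcal{H}_d}\mathit{\Gamma}_x\SS(X)$ for $d\in\int$, together with the functorial tower of triangles whose factor $A_d$ is $\coprod_{x\in\mathcal{H}_d}\mathit{\Gamma}_xA$.

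It remains to address boundedness. Every non-empty closed subset of $X$ has Krull dimension at least $0$, so $\mathcal{H}_{\leq -1} = \varnothing$ and $\mathit{\Gamma}_{\leq -1}\SS(X) = 0$. A localization of a regular local ring is regular, so $\Sing X$ is specialization closed; hence for $x\in\Sing X$ we have $\mathcal{V}(x)\cie\Sing X$ and therefore $\dim x \leq \dim_\mathrm{Krull}(\Sing X) =: n$, i.e.\ $\Sing X\cie\mathcal{H}_{\leq n}$. Since $\supp A\cie\Sing X$ for every $A\in\SS(X)$, applying the support formula \cite{StevensonActions}*{Proposition~5.7~(4)} to the localization triangle $\mathit{\Gamma}_{\leq n}A\to A\to L_{\leq n}A\to\S\mathit{\Gamma}_{\leq n}A$ gives $\supp L_{\leq n}A = \varnothing$, so $L_{\leq n}A = 0$ and $\mathit{\Gamma}_{\leq n}A\iso A$; thus $\mathit{\Gamma}_{\leq n}\SS(X) = \SS(X)$. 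Hence, when $\Sing X$ has finite Krull dimension $n$, the only terms of the filtration are $0 = \mathit{\Gamma}_{\leq -1}\SS(X)\cie\mathit{\Gamma}_{\leq 0}\SS(X)\cie\cdots\cie\mathit{\Gamma}_{\leq n}\SS(X) = \SS(X)$, the associated tower is finite, and its uniqueness up to isomorphism and functoriality are inherited from Theorem~\ref{thm_gen_filt}. I expect no serious obstacle here: the substantive content sits in \cite{StevensonActions}, \cite{Stevensonclass}, and Theorem~\ref{thm_gen_filt}, and the only genuinely new verifications are the routine ones above — that $\D(X)$ with the action $\odot$ meets all the running hypotheses, that Krull dimension of closures is a dimension function, and that $\Sing X$ is specialization closed of Krull dimension $n$, the last two being exactly what fixes the bottom and top of the filtration.
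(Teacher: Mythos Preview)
Your proposal is correct and matches the paper's approach: the paper gives no separate proof of this theorem, merely stating that it follows from Theorem~\ref{thm_gen_filt} applied with the Krull dimension function, and your write-up just makes explicit the routine verifications (hypotheses on $\D(X)$ and the action, the two axioms for $\dim$, and the boundedness via $\s\SS(X)=\Sing X$) that the paper leaves implicit.
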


This can be viewed as a non-affine stable version of the filtration of \cite{EnochsHuang} when $X$ is Gorenstein. Indeed, in this case we can identify $\SS(X)$ with $\sGInj X$ the stable category of Gorenstein injective quasi-coherent $\str_X$-modules by \cite{KrStab}*{Proposition 7.13}, and so we have a filtration up to injective summands. We will now prove that these towers correspond to actual filtrations of quasi-coherent Gorenstein injective $\str_X$-modules; this gives a new proof of \cite{EnochsHuang}*{Theorem~3.1} when $X$ is affine. 
We now furthermore assume that $X$ is Gorenstein of Krull dimension $n$. Let us recall what it means for a quasi-coherent sheaf on a Gorenstein scheme to be Gorenstein injective (further details on Gorenstein injective modules may be found, for instance, in \cite{EnochsJenda}).

\begin{defn}\label{defn_ginj}
A quasi-coherent $\str_X$-module $G$ is \emph{Gorenstein injective} if there exists an exact sequence
\begin{displaymath}
E = \cdots \to E_1 \to E_0 \to E^0 \to E^1 \to \cdots
\end{displaymath}
of injective quasi-coherent $\str_X$-modules where $G = \ker( E^0 \to E^1)$ is the zeroth syzygy of $E$. We call $E$ a \emph{complete injective resolution} of $G$. We denote the full subcategory of Gorenstein injective sheaves by $\GInj X$.
\end{defn}

\begin{rem}
This is not the ``usual'' definition of a Gorenstein injective sheaf. One normally further requires that the complex of abelian groups $\Hom(I,E)$ is exact for every injective quasi-coherent sheaf $I$. However, this condition is automatically satisfied by any acyclic complex of injectives when $X$ is Gorenstein; one can deduce this either from \cite{KrStab}*{Proposition~7.13} or \cite{MurfetTAC}.
\end{rem}

The category $\GInj X$ inherits the structure of an exact category from $\QCoh X$. Moreover, it is a Frobenius category and thus the stable category $\sGInj X$, obtained by annihilating the projective/injective objects, is triangulated. As noted above it is equivalent to $\SS(X)$ as the scheme $X$ is Gorenstein. For a quasi-coherent Gorenstein injective sheaf $G$ we use $\pi(G)$ to denote its image in the stable category.

We first show Gorenstein injective quasi-coherent sheaves are particularly well behaved with respect to the local cohomology functors on $\D(X)$ (a fact which is presumably well known). This is a rather trivial consequence of the following lemma which, in analogy with \cite{BIK}*{Theorem~9.1}, justifies calling the $\mathit{\Gamma}_\mathcal{V}$ local cohomology functors. We denote by $\mathbf{R}\underline{H}_\mathcal{V}(-)$ the right derived functor of $\underline{H}_\mathcal{V}(-)$, the subsheaf with supports in $\mathcal{V}$.

\begin{lem}\label{lem_loccohom}
Let $\mathcal{V}$ be a specialization closed subset of $X$. There is a natural isomorphism of endofunctors on $\D(X)$
\begin{displaymath}
\mathit{\Gamma}_{\mathcal{V}} \iso \mathbf{R}\underline{H}_\mathcal{V}(-).
\end{displaymath}
\end{lem}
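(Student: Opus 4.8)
The plan is to identify both functors by showing they are Bousfield localization (acyclization) functors onto the same subcategory, namely the localizing subcategory $\D_{\mathcal{V}}(X)$ of complexes with cohomology supported on $\mathcal{V}$. The functor $\mathit{\Gamma}_{\mathcal{V}}$ is, by construction (see \cite{BaRickard}), the acyclization associated to the smashing localizing ideal generated by the compact objects supported on $\mathcal{V}$; equivalently it is the colocalization onto $\D_{\mathcal{V}}(X)$, so $\mathit{\Gamma}_{\mathcal{V}}\to \Ident$ is the counit and $\mathit{\Gamma}_{\mathcal{V}}\mathbf{1}\otimes^{\mathbf{L}}(-)$ computes it. On the other side, $\mathbf{R}\underline{H}_{\mathcal{V}}(-)$ is the right derived functor of the subsheaf-with-supports functor; the standard local cohomology triangle $\mathbf{R}\underline{H}_{\mathcal{V}}(F)\to F\to \mathbf{R}j_{*}j^{*}F$, where $j\colon X\setminus\mathcal{V}\hookrightarrow X$ is the open immersion complementary to (a suitable closed approximation of) $\mathcal{V}$, exhibits $\mathbf{R}\underline{H}_{\mathcal{V}}$ as a colocalization as well.

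First I would reduce to the case where $\mathcal{V}$ is closed with quasi-compact complement: a Thomason/specialization-closed $\mathcal{V}$ is a filtered union of such closed subsets $\mathcal{V}_{\alpha}$, both $\mathit{\Gamma}_{(-)}$ and $\mathbf{R}\underline{H}_{(-)}$ commute with the resulting filtered colimit (for the former this is part of the construction of the idempotents, for the latter it is because $\underline{H}_{\mathcal{V}} = \colim \underline{H}_{\mathcal{V}_\alpha}$ and $\QCoh X$ is Grothendieck so filtered colimits are exact and pass through the derived functor on injective resolutions), so a natural isomorphism over the $\mathcal{V}_{\alpha}$ assembles to one over $\mathcal{V}$. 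Next, for $\mathcal{V}$ closed with open complement $j\colon U\hookrightarrow X$, I would recall that $L_{\mathcal{V}}\mathbf{1}\otimes^{\mathbf{L}}(-)\iso \mathbf{R}j_{*}j^{*}(-)$ — this is essentially the statement that the smashing localization killing $\D_{\mathcal{V}}(X)$ is the one associated to the open immersion, which follows from the compatibility of the tensor action with restriction to $U$ together with the fact that $j^{*}$ identifies $\D(X)/\D_{\mathcal{V}}(X)$ with $\D(U)$. Then the defining triangle $\mathit{\Gamma}_{\mathcal{V}}\mathbf{1}\to\mathbf{1}\to L_{\mathcal{V}}\mathbf{1}$, tensored with $F$, becomes $\mathit{\Gamma}_{\mathcal{V}}F\to F\to \mathbf{R}j_{*}j^{*}F$, which is exactly the local cohomology triangle, and comparing connecting maps (both are the canonical ones, and the octahedral/uniqueness-of-fibre argument pins down the isomorphism) yields $\mathit{\Gamma}_{\mathcal{V}}F\iso \mathbf{R}\underline{H}_{\mathcal{V}}(F)$ naturally.

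The main obstacle I anticipate is the clean identification $L_{\mathcal{V}}\mathbf{1}\otimes^{\mathbf{L}}(-)\iso \mathbf{R}j_{*}j^{*}(-)$ at the level of functors (not just objects), and in particular checking that the natural transformation $\mathbf{1}\to L_{\mathcal{V}}\mathbf{1}$ tensored with $F$ agrees with the unit $F\to\mathbf{R}j_{*}j^{*}F$ of the adjunction $j^{*}\dashv \mathbf{R}j_{*}$. This is a compatibility-of-adjoints bookkeeping issue: both $L_{\mathcal{V}}$ and $\mathbf{R}j_{*}j^{*}$ are the reflection onto the same Bousfield-local subcategory $j_{*}\D(U)\cap\D(X) = \D_{\mathcal{V}}(X)^{\perp}$, so they are canonically isomorphic as localization functors, and under that isomorphism the two universal arrows out of $F$ must coincide by the universal property. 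Once this is in place, everything else is formal, and I would simply remark that the identification is natural in $F$ because it is built from adjunction units and triangulated functors, with naturality of the connecting map handled by the standard argument that a morphism of triangles which is an isomorphism on two vertices is one on the third.
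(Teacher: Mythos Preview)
Your proposal is correct, but it takes a different route from the paper's proof. The paper proceeds by first producing a single canonical natural transformation $\mathbf{R}\underline{H}_{\mathcal{V}}(-)\to \mathit{\Gamma}_{\mathcal{V}}$ (using that $\mathbf{R}\underline{H}_{\mathcal{V}}(F)$ has support in $\mathcal{V}$ and the universal property of the acyclization), and then checks this map is an isomorphism by restricting to an open affine cover: on each affine $U$ one has $\mathit{\Gamma}_{\mathcal{V}}\vert_U \iso \mathit{\Gamma}_{\mathcal{V}\cap U}$ by \cite{StevensonActions}*{Lemma~8.1}, and the affine case is exactly \cite{BIK}*{Theorem~9.1}. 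Your argument instead shows directly that both functors are Bousfield colocalizations onto the same subcategory $\D_{\mathcal{V}}(X)$, reducing to closed $\mathcal{V}$ via a filtered colimit and then identifying $L_{\mathcal{V}}$ with $\mathbf{R}j_*j^*$. The paper's approach is shorter because it outsources the real work to the affine statement already proved in \cite{BIK}; your approach is more self-contained and conceptually uniform, but requires you to carry the filtered-colimit bookkeeping and the identification $L_{\mathcal{V}}\iso \mathbf{R}j_*j^*$ explicitly (which, as you note, is where the care is needed). Either way one is ultimately using that $\D_{\mathcal{V}}(X)$ coincides with the smashing ideal attached to $\mathcal{V}$, so the two proofs are close in spirit even if the packaging differs.
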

\begin{proof}
Since the support of  $\mathbf{R}\underline{H}_\mathcal{V}(-)$ is, essentially by definition, contained in $\mathcal{V}$ there is a canonical natural transformation $\mathbf{R}\underline{H}_\mathcal{V}(-) \to \mathit{\Gamma}_\mathcal{V}$. By \cite{StevensonActions}*{Lemma~8.1} the functor $\mathit{\Gamma}_\mathcal{V}$ restricted to an open affine $U$ is precisely $\mathit{\Gamma}_{\mathcal{V}\intersec U}$ in $D(U)$. Thus, since on an affine scheme the canonical comparison map is an isomorphism by \cite{BIK}*{Theorem~9.1} (on an affine scheme our $\mathit{\Gamma}_\mathcal{V}$ agrees with the construction of Benson, Iyengar, and Krause by \cite{StevensonActions}*{Proposition~9.4}), the map $\mathbf{R}\underline{H}_\mathcal{V}(-) \to \mathit{\Gamma}_\mathcal{V}$ is an isomorphism as claimed as we can check this on an open affine cover.
\end{proof}

\begin{prop}\label{prop_ginj_acyclicity}
Let $G$ be a Gorenstein injective sheaf considered as an object of $\D(X)$ concentrated in degree zero. Then $\mathit{\Gamma}_\mathcal{V}G$ is again, up to isomorphism, a Gorenstein injective sheaf concentrated in degree zero.
\end{prop}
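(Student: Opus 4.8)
The plan is to combine the local cohomology interpretation of $\mathit{\Gamma}_\mathcal{V}$ from Lemma~\ref{lem_loccohom} with the defining complete injective resolution of $G$. First I would write down a complete injective resolution $E = (\cdots \to E_1 \to E_0 \to E^0 \to E^1 \to \cdots)$ with $G = \ker(E^0 \to E^1)$, so that $G[0]$ sits inside the acyclic complex $E$ of injectives; in fact $E$ represents the zero object of $\SS(X)$, but more usefully, brutally truncating $E$ gives an injective resolution $G[0] \xrightarrow{\sim} (E^0 \to E^1 \to \cdots)$ and a ``coresolution'' by the $E_i$. The key point is that each injective quasi-coherent sheaf $E_i, E^j$ decomposes (Matlis theory on the noetherian scheme $X$) as a coproduct of indecomposable injectives $E(x)$, one for each point $x \in X$, and applying $\underline{H}_\mathcal{V}$ to such a summand is simple: $\underline{H}_\mathcal{V}(E(x)) = E(x)$ if $x \in \mathcal{V}$ and $0$ otherwise, and in either case $\underline{H}_\mathcal{V}$ is exact on injectives. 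Hence $\mathit{\Gamma}_\mathcal{V}G \iso \mathbf{R}\underline{H}_\mathcal{V}(G) \iso \underline{H}_\mathcal{V}(E^0 \to E^1 \to \cdots)$, computed as a complex, and this complex is again a complex of injectives.

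Next I would check acyclicity in the two directions. Applying $\underline{H}_\mathcal{V}$ to the acyclic complex $E$: since $\underline{H}_\mathcal{V}$ acts as a ``coordinate projection'' on the Matlis decomposition of each term, $\underline{H}_\mathcal{V}(E)$ is again acyclic (it is just the subcomplex of summands supported at points of $\mathcal{V}$, and acyclicity can be checked stalkwise, where it amounts to keeping only the $x \in \mathcal{V}$ coordinates of a split-exact-in-the-relevant-sense situation; more robustly, on each affine $U$ this is the exactness of $\mathbf{R}\underline{H}_{\mathcal{V}\cap U}$ applied to an acyclic complex of injectives, which holds because $\underline{H}_{\mathcal{V}\cap U}$ is exact on injective modules over a noetherian ring). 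So $\underline{H}_\mathcal{V}(E)$ is itself a complete injective resolution, and its zeroth syzygy is $\ker(\underline{H}_\mathcal{V}(E^0) \to \underline{H}_\mathcal{V}(E^1))$. It remains to identify this kernel with $\mathit{\Gamma}_\mathcal{V}G$. Since $\underline{H}_\mathcal{V}$ is left exact, $\ker(\underline{H}_\mathcal{V}(E^0) \to \underline{H}_\mathcal{V}(E^1)) = \underline{H}_\mathcal{V}(\ker(E^0\to E^1)) = \underline{H}_\mathcal{V}(G)$; and because $\mathbf{R}\underline{H}_\mathcal{V}(G)$ is represented by the complex $\underline{H}_\mathcal{V}(E^0 \to E^1 \to \cdots)$, which is acyclic except in degree zero by the previous sentence, we get $\mathit{\Gamma}_\mathcal{V}G \iso \mathbf{R}\underline{H}_\mathcal{V}(G) \iso \underline{H}_\mathcal{V}(G)[0]$, concentrated in degree zero and realized as the zeroth syzygy of the complete injective resolution $\underline{H}_\mathcal{V}(E)$. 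That is exactly the assertion.

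The main obstacle I anticipate is justifying cleanly that $\underline{H}_\mathcal{V}$ is exact on injective quasi-coherent sheaves and that it preserves acyclicity of the two-sided complex $E$ — i.e.\ the passage from the well-known affine/local statement to the quasi-coherent scheme setting. The honest way is to reduce to an affine open cover: $\underline{H}_\mathcal{V}$ commutes with restriction to opens, an injective quasi-coherent sheaf restricts to an injective quasi-coherent sheaf on each affine open (for $X$ noetherian), and exactness/acyclicity are local. One should also be slightly careful that ``complete injective resolution'' is preserved, not merely ``acyclic complex of injectives''; but as noted in the remark following Definition~\ref{defn_ginj}, over a Gorenstein scheme any acyclic complex of injectives is automatically a complete injective resolution, so this costs nothing. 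A minor secondary point is the interchange of the truncated resolution with $\mathbf{R}\underline{H}_\mathcal{V}$: this is legitimate because the $E^j$ are $\underline{H}_\mathcal{V}$-acyclic, so $(E^0 \to E^1 \to \cdots)$ computes $\mathbf{R}\underline{H}_\mathcal{V}(G)$.
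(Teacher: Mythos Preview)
Your overall strategy matches the paper's exactly: identify $\mathit{\Gamma}_\mathcal{V}$ with $\mathbf{R}\underline{H}_\mathcal{V}$ via Lemma~\ref{lem_loccohom}, apply $\underline{H}_\mathcal{V}$ termwise to a complete injective resolution $E$, observe that $\underline{H}_\mathcal{V}(E)$ is again a complex of injectives (because $\underline{H}_\mathcal{V}$ kills or preserves each indecomposable injective), and read off $\underline{H}_\mathcal{V}(G)$ as the zeroth syzygy. The wrap-up identifying $\mathit{\Gamma}_\mathcal{V}G \iso \underline{H}_\mathcal{V}(G)[0]$ is also correct.

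The gap is precisely at the step you flagged as ``the main obstacle'': showing that $\underline{H}_\mathcal{V}(E)$ is acyclic. Your proposed justifications do not work. The Matlis decomposition of each term is not functorial in the differentials---there are nonzero maps $E(R/\mathfrak{p}) \to E(R/\mathfrak{q})$ whenever $\mathfrak{p}\subseteq\mathfrak{q}$---so $\underline{H}_\mathcal{V}(E)$ is genuinely a subcomplex, not a direct summand of $E$, and the ``coordinate projection / split-exact'' picture is misleading. Likewise, ``$\underline{H}_\mathcal{V}$ is exact on injective modules'' only says that split short exact sequences go to split short exact sequences; it does not control an unbounded acyclic complex of injectives, because the intermediate syzygies $Z^j=\ker(E^j\to E^{j+1})$ are not injective, and $\underline{H}_\mathcal{V}$ is merely left exact on them. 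Reducing to affines does not dissolve the issue: the same unbounded complex of injectives sits over each affine.

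What the paper actually uses here is the finite cohomological dimension of local cohomology: $\mathbf{R}^i\underline{H}_\mathcal{V}$ vanishes on all quasi-coherent sheaves for $i\geq \dim X = n$ (e.g.\ \cite{HartshorneLC}*{Proposition~1.12}). Since every brutal truncation $E^{\geq j}$ is an injective resolution of the syzygy $Z^j$, one gets $H^m(\underline{H}_\mathcal{V}(E)) \iso \mathbf{R}^{m-j}\underline{H}_\mathcal{V}(Z^j)$ for $m>j$; choosing $j\leq m-n$ forces this to vanish. That is the missing ingredient---and it is exactly where the Gorenstein hypothesis (finite Krull dimension) enters. If you want to run your stalkwise reduction instead, it can be made to work, but only by invoking the same bound over each local ring $\str_{X,x}$.
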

\begin{proof}
Let $E$ be a complete injective resolution of $G$. The brutal truncation $E^{\geq 0}$ is an injective resolution for $G$. Thus we may compute $\mathit{\Gamma}_\mathcal{V}G \iso \mathbf{R}\underline{H}_\mathcal{V}(G)$ by applying $\underline{H}_\mathcal{V}$ to $E$ and truncating.

Observe that $\underline{H}_\mathcal{V}(E)$ is again a complex of injectives since $\underline{H}_\mathcal{V}$ either kills or preserves any indecomposable injective. Furthermore it is an acyclic complex since $\mathbf{R}^i\underline{H}_\mathcal{V}$ vanishes on all quasi-coherent sheaves for $i\geq \dim X = n$ (see for example \cite{HartshorneLC}*{Proposition~1.12}) and any truncation of $E$ is the resolution of a sheaf.

Thus the complex $\underline{H}_\mathcal{V}(E)$ is an acyclic complex of injectives with zeroth syzygy $\underline{H}_\mathcal{V}(G)$ which proves that $\mathit{\Gamma}_\mathcal{V}G$ is, up to isomorphism, a Gorenstein injective sheaf concentrated in degree zero.
\end{proof}

\begin{rem}
This result also gives us information about the action of $\D(X)$ on $\SS(X) \iso \sGInj X$. As above let us denote the projection $\GInj X \to \sGInj X$ by $\pi$. Combining the proposition with \cite{Stevensonclass}*{Proposition~5.3} and Lemma~5.4 gives isomorphisms in $\sGInj X$
\begin{displaymath}
\mathit{\Gamma}_{\leq i}(\pi G) \iso \pi(\mathit{\Gamma}_{\leq i}G) \iso \pi(\underline{H}_{\mathcal{H}_{\leq i}}(G))
\end{displaymath}
for all $G\in \GInj X$.
\end{rem}

\begin{thm}\label{thm_gor_filt}
Suppose $X$ is a separated Gorenstein scheme of Krull dimension $n$ and let $G$ be a quasi-coherent Gorenstein injective $\str_X$-module. Then $G$ has a filtration by Gorenstein injective subsheaves
\begin{displaymath}
0 = G_{\leq -1} \cie G_{\leq 0} \cie \cdots \cie G_{\leq n-1} \cie G_{\leq n} = G
\end{displaymath}
where each factor $G_{\leq i}/G_{\leq i-1}$ is Gorenstein injective and has a direct sum decomposition indexed by $\mathcal{H}_i$, the irreducible closed subsets of $X$ of dimension $i$. The factor $G_x$, corresponding to a point $x$, is $\mathfrak{m}_x$-local and $\mathfrak{m}_x$-torsion where $\mathfrak{m}_x\cie \str_{X,x}$ is the maximal ideal of the stalk of the structure sheaf at $x$. If $x\notin \Sing X$ then the summand corresponding to $x$ is either injective or zero. These filtrations are unique up to isomorphism and functorial. Furthermore, the image of this filtration in the stable category is the tower of Theorem \ref{thm_filt}.
\end{thm}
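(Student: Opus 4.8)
\textit{Proof sketch.}
The plan is to lift the stable tower of Theorem~\ref{thm_filt} to a filtration by honest subsheaves, exploiting the identification of $\mathit{\Gamma}_{\mathcal V}$ with ordinary local cohomology (Lemma~\ref{lem_loccohom}) and the special feature, established inside the proof of Proposition~\ref{prop_ginj_acyclicity}, that on a Gorenstein injective $G$ the derived functor $\mathbf R\underline{H}_{\mathcal V}(-)$ is concentrated in degree zero. Concretely I would put $G_{\leq i} := \underline{H}_{\mathcal H_{\leq i}}(G)$ for $-1\leq i\leq n$; since $\mathcal H_{\leq i-1}\cie\mathcal H_{\leq i}$ these form an increasing chain of subsheaves of $G$ with $G_{\leq -1}=0$ and $G_{\leq n}=G$. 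Fixing a complete injective resolution $E$ of $G$, the complex $\underline{H}_{\mathcal H_{\leq i}}(E)$ has injective terms (the functor $\underline{H}_{\mathcal H_{\leq i}}$ annihilates or preserves each indecomposable injective summand of a term of $E$), is acyclic by the argument in the proof of Proposition~\ref{prop_ginj_acyclicity}, and has zeroth syzygy $\underline{H}_{\mathcal H_{\leq i}}(\ker(E^0\to E^1))=G_{\leq i}$ by left exactness of $\underline{H}$; hence each $G_{\leq i}$ is Gorenstein injective.

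Next I would describe the factors. As $\mathcal H_{\leq i-1}\cie\mathcal H_{\leq i}$, the complex $\underline{H}_{\mathcal H_{\leq i-1}}(E)$ is a subcomplex of $\underline{H}_{\mathcal H_{\leq i}}(E)$; let $C_i$ be the quotient complex. Its $j$-th term is the direct sum of those indecomposable injective summands $J(x)$ of $E^j$ with $x\in\mathcal H_i$, so $C_i$ is again a complex of injectives; it is acyclic since it occupies a short exact sequence of complexes whose other two terms are acyclic; and comparing zeroth syzygies in that short exact sequence of acyclic complexes yields a short exact sequence of quasi-coherent sheaves $0\to G_{\leq i-1}\to G_{\leq i}\to Q_i\to 0$ with $Q_i$ the zeroth syzygy of $C_i$, hence Gorenstein injective. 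Because $\mathcal H_i$ is discrete (Lemma~\ref{lem_disc_decomp}) and $\Hom_{\str_X}(J(x),J(y))=0$ for $x,y$ incomparable in the specialization order (each $J(x)$ has $x$ as its unique associated point, while every nonzero quotient of $J(x)$ is supported on generizations of $x$), the differentials of $C_i$ are diagonal with respect to the decomposition of its terms along $\mathcal H_i$; thus $C_i=\coprod_{x\in\mathcal H_i}C_i(x)$ as complexes and $Q_i=\coprod_{x\in\mathcal H_i}G_x$ with $G_x$ the zeroth syzygy of $C_i(x)$. Every term of $C_i(x)$ is a coproduct of copies of $J(x)$, which is $\mathfrak{m}_x$-local and $\mathfrak{m}_x$-torsion, so its subsheaf $G_x$ has the same two properties; and if $x\notin\Sing X$ then $\mathit{\Gamma}_x\SS(X)=0$ (as $\supp\SS(X)=\Sing X$), whence the class of $G_x$ in $\sGInj X\iso\SS(X)$ vanishes and $G_x$ is an injective quasi-coherent sheaf (or zero).

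For functoriality and uniqueness I would argue as follows. The assignment $G\mapsto(G_{\leq i})_i$ involves no choices and is functorial because $\underline{H}_W$ is; likewise the decomposition of $Q_i$ is functorial once one notes $G_x\iso\underline{H}_{\mathcal V(x)}(Q_i)\big/\underline{H}_{\mathcal V(x)\setminus\{x\}}(Q_i)$. Conversely, in any filtration with the stated properties the term $G'_{\leq i}$ is an iterated extension of sheaves supported on $\mathcal H_{\leq i}$ (each $G'_x$ being $\mathfrak{m}_x$-torsion with $\dim\mathcal V(x)\leq i$, hence supported on $\mathcal V(x)\cie\mathcal H_{\leq i}$), so it is $\mathcal H_{\leq i}$-torsion, while the quotient $G/G'_{\leq i}$, being an iterated extension of $\mathfrak{m}_x$-torsion sheaves with $\dim\mathcal V(x)>i$, has no nonzero $\mathcal H_{\leq i}$-torsion; left exactness of $\underline{H}_{\mathcal H_{\leq i}}$ then forces $G'_{\leq i}=\underline{H}_{\mathcal H_{\leq i}}(G)=G_{\leq i}$, and the decomposition of the factor is pinned down by the same local cohomology formula. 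Finally, each short exact sequence $0\to G_{\leq i-1}\to G_{\leq i}\to Q_i\to 0$ is a conflation in the Frobenius category $\GInj X$, so it induces a triangle in $\sGInj X\iso\SS(X)$; the remark following Proposition~\ref{prop_ginj_acyclicity} gives natural isomorphisms $\pi(G_{\leq i})\iso\mathit{\Gamma}_{\leq i}(\pi G)$, and comparing the resulting triangles with those of Theorem~\ref{thm_filt} identifies $\pi(Q_i)$ with $A_i$ and $\pi(G_x)$ with $\mathit{\Gamma}_x(\pi G)$.

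The main obstacle is the step producing the factors: the abstract machinery only sees the stable (triangulated) category, so one must come down to genuine complexes of quasi-coherent sheaves and invoke the structure theory of indecomposable injectives on a noetherian scheme — their parametrization by points, the rule by which $\underline{H}_{\mathcal V}$ kills or fixes each of them, and the vanishing of $\Hom$ between those sitting at specialization-incomparable points — in order to verify simultaneously that the factors $Q_i$ are genuinely (not merely stably) Gorenstein injective and that they split as the claimed coproduct over $\mathcal H_i$. Everything else reduces to bookkeeping with the left-exact functors $\underline{H}_W$ and to the results already established.
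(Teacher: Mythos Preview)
Your proof is correct and follows the same overall strategy as the paper: define $G_{\leq i}=\underline{H}_{\mathcal H_{\leq i}}(G)$, invoke Proposition~\ref{prop_ginj_acyclicity} to see these are Gorenstein injective, and identify the resulting chain with the derived-category tower. The execution, however, differs in two places. First, for the Gorenstein injectivity of the factors $Q_i$ the paper simply cites the Enochs--Jenda result that a cokernel of a monomorphism between Gorenstein injectives is again Gorenstein injective, whereas you build an explicit complete injective resolution $C_i$ of $Q_i$ as the termwise quotient $\underline{H}_{\mathcal H_{\leq i}}(E)/\underline{H}_{\mathcal H_{\leq i-1}}(E)$; this is self-contained and avoids the external reference. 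Second, for the direct-sum decomposition of $Q_i$ the paper appeals to the abstract splitting in $\D(X)$ furnished by Lemma~\ref{lem_disc_decomp} and then observes that a decomposition in $\D(X)$ of an object concentrated in a single degree descends to $\QCoh X$; you instead decompose $C_i$ by hand, using the Matlis decomposition of each $E^j$ together with the vanishing $\Hom(J(x),J(y))=0$ for specialization-incomparable $x,y$ to see that the differentials of $C_i$ are block-diagonal over $\mathcal H_i$. Your route is more concrete and makes the local structure of the factors visible directly at the level of complexes, while the paper's route is shorter and reuses the triangulated machinery of Section~\ref{sec_disc}. Your uniqueness argument is also more detailed than the paper's, which simply asserts it is clear from the construction.
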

\begin{proof}
Let $G$ be a Gorenstein injective sheaf and let us denote $\mathit{\Gamma}_{{\leq i}}G$ by $G_{\leq i}$ for $i\in \int$. The objects $G_{\leq i} \iso \underline{H}_{\mathcal{H}_{\leq i}}(G)$ are again Gorenstein injective sheaves in degree zero by Proposition \ref{prop_ginj_acyclicity}. For $i\leq j$ the canonical map $G_{\leq i} \to G_{\leq j}$ in the localization triangle
\begin{displaymath}
G_{\leq i} \to G_{\leq j} \to L_{\leq i} G_{\leq j} \to \S G_{\leq i}
\end{displaymath}
in $\D(X)$ induces a monomorphism on cohomology, namely it gives the canonical inclusion $\underline{H}_{\mathcal{H}_{\leq i}}(G) \cie \underline{H}_{\mathcal{H}_{\leq j}}(G)$. Combining these observations we see this triangle corresponds to an exact sequence
\begin{displaymath}
0 \to G_{\leq i} \to G_{\leq j} \to L_{\leq i}G_{\leq j} \to 0
\end{displaymath}
of quasi-coherent sheaves. By (the sheaf analogue of) \cite{EnochsJenda}*{Theorem~10.1.4} the $\str_X$-module $L_{\leq i}G_{\leq j}$ is also Gorenstein injective as it is the cokernel of a monomorphism between Gorenstein injectives. This produces the filtration and shows that its avatar in the derived category is the tower associated to our dimension function for the action of $\D(X)$ on itself. Projecting to the stable category gives the tower of Theorem \ref{thm_filt}. It is clear by construction that the filtration is functorial and unique up to isomorphism.

The factors in the filtration decompose as claimed essentially by Lemma \ref{lem_disc_decomp}. We have seen above that all of the triangles in the tower for $G$ in $\D(X)$ are given by short exact sequences in $\GInj X$. Since the $G_i$ decompose in $\D(X)$ they must already decompose into a sum of the desired form in $\GInj X$. That the summands in this decomposition have the claimed properties, i.e.\ are torsion and local with respect to the appropriate maximal ideal, is evident.


Finally, the assertion that, for each factor, summands corresponding to points not in the singular locus are either injective or zero is immediate from the fact that the support of $\SS(X)$ is precisely $\Sing X$ \cite{Stevensonclass}*{Lemma~7.4}; if $x\notin \Sing X$ then $\pi(\mathit{\Gamma}_x G) \iso \mathit{\Gamma}_x \pi(G) = 0$ implying $\mathit{\Gamma}_x G$ must be injective.
\end{proof}

\section{A refinement for complete intersections}

We now restrict ourselves to considering a (non-abstract) local complete intersection $(R,\mathfrak{m},k)$ and point out that one can, up to an injective, refine this filtration. Let us suppose that $R$ has codimension $c$ i.e.\
\begin{displaymath}
\dim_k \mathfrak{m}/\mathfrak{m}^2 - \dim R = c.
\end{displaymath}
Equivalently the complexity of the residue field is $c$.

Let $Y$ be a generic hypersurface for $R$ as in \cite{OrlovSing2}*{Section~2}. We recall the construction of such a hypersurface. Choose a regular local ring $Q$ and a surjection $Q\to R$ with kernel generated by a regular sequence of length $c$. Set $\mathcal{E} = \str_{\Spec Q}^{\oplus c}$, and consider the section $(q_1,\ldots,q_c)$ of $\mathcal{E}$ where the $q_i$ form a regular sequence generating the kernel of $Q\to R$. The hypersurface $Y$ is defined by the corresponding section $\Sigma_{i=1}^c q_i x_i$ of $\str_{\mathbb{P}^{c-1}_Q}(1)$ where the $x_i$ form a basis for the free $Q$-module $H^0(\mathbb{P}^{c-1}_Q, \str_{\mathbb{P}^{c-1}_Q}(1))$. Our setup is summarised by the commutative diagram
\begin{equation}\label{eq_1}
\xymatrix{
\mathbb{P}^{c-1}_R \ar[r]^-i \ar[d]_p & Y \ar[r]^-u & \mathbb{P}^{c-1}_Q \ar[d]^{q} \\
\Spec R \ar[rr]_-j && \Spec Q.
}
\end{equation}

It is proved in \cite{Stevensonclass}*{Corollary~10.5} that $\Sing Y$ controls the localizing subcategories of $S(R)$. Our aim is to show that this classification of localizing subcategories in terms of subsets of $\Sing Y$ is reflected by a refinement of the filtrations of Theorem~\ref{thm_gor_filt}.

\subsection{Local periodicity}\label{ssec_periodicity}
We begin with a brief aside concerning the local structure of $\SS(R)$ for a local complete intersection $R$ as above. Since we will use this periodicity in our refined filtration, and it was not made explicit in \cite{Stevensonclass}, we include the details here.

Let us denote by $\mathcal{L}$ the line bundle $u^*\str(1)_{\mathbb{P}^{c-1}_Q}$ on $Y$ and by $\{t_i\}_{i=1}^c$ global sections of $\mathcal{L}$ corresponding to the embedding $u$. The sections $t_i$ give a standard open affine cover of $Y$ by the open subschemes
\begin{displaymath}
Y_{t_i} \iso \Spec Q[X_1,\ldots, \hat{X}_i,\ldots, X_{c}]/(\sum_{j\neq i} q_jX_j + q_i),
\end{displaymath}
where as usual $\hat{X}_i$ means the $i$th indeterminate is omitted

Each $Y_{t_i}$ is an affine hypersurface and so the suspension in $\SS(Y_{t_i})$ is 2-periodic. We can view this through the lens of \cite{Stevensonclass}*{Lemma~7.8} which identifies the action of $\mathcal{L}$ on $\SS(Y)$ with $\S^2$. The action is compatible with restriction to open affines by \cite{Stevensonclass}*{Lemma~7.3} so we deduce from the morphism
\begin{displaymath}
\id_{\SS(Y)} \iso \str_Y \odot (-) \to \mathcal{L} \odot (-) \iso \S^2,
\end{displaymath}
corresponding to the section $t_i$, a natural isomorphism $\id_{\SS(Y_{t_i})} \stackrel{\sim}{\to} \S^2$ of endofunctors on $\SS(Y_{t_i})$. Essentially this cover together with the compatibility conditions of \cite{StevensonActions}*{Section~8} gives, by the above reasoning, the following result.

\begin{prop}\label{prop_periodic}
Suppose $A$ is an object of $\SS(R)$ such that
\begin{displaymath}
\supp_{D(Y)} A \cie Y_{t_i}
\end{displaymath}
for some $1\leq i \leq c$. Then $t_i\colon \str_Y \to \mathcal{L}$ gives a natural isomorphism $A \stackrel{\sim}{\to} \S^2 A$. In particular for every point $y\in \Sing Y$ the subcategory $\mathit{\Gamma}_y \SS(R)$ is $2$-periodic.
\end{prop}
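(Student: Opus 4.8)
The plan is to realise the periodicity isomorphism by showing that multiplication by the section $t_i\colon \str_Y \to \mathcal{L}$ becomes invertible after applying $-\odot A$, and then to deduce the statement about $\mathit{\Gamma}_y\SS(R)$ by checking that its objects have support confined to a single chart. First I would observe that the section $t_i$ gives a morphism of objects in $\D(Y)$ whose cone is (a shift of) $\str_Y/(t_i) \iso i_*\str_{Y\setminus Y_{t_i}}$-type object, so more precisely the Koszul-type triangle $\str_Y \xrightarrow{t_i} \mathcal{L} \to \mathcal{C}_i \to \S\str_Y$ in $\D(Y)$ has $\supp_{\D(Y)}\mathcal{C}_i = Y\setminus Y_{t_i}$ (the vanishing locus of $t_i$). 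Tensoring this triangle with $A$ via the action $\odot$ and using that the action is compatible with supports (\cite{StevensonActions}*{Proposition~5.7}, plus the support computations of \cite{Stevensonclass}*{Section~7}), the middle object $\mathcal{C}_i\odot A$ has support contained in $(Y\setminus Y_{t_i})\intersec \supp_{\D(Y)}A = \varnothing$ by hypothesis. Since the support detects vanishing (this is part of the standing hypotheses via \cite{StevensonActions}*{Theorem~6.9}, which apply to $\SS(R)$ as recorded earlier), $\mathcal{C}_i\odot A = 0$, so $t_i\odot A\colon A\to \S^2 A$ is an isomorphism, where I use the identification $\mathcal{L}\odot(-)\iso\S^2$ on $\SS(Y)$ from \cite{Stevensonclass}*{Lemma~7.8} and naturality/compatibility of that identification over the chart.

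Next I would address the in-particular clause. For a point $y\in \Sing Y$, every object $B$ of $\mathit{\Gamma}_y\SS(R) = \mathit{\Gamma}_y\mathbf{1}\odot\SS(R)$ has $\supp_{\D(Y)}B\cie\{y\}$ (indeed $\mathit{\Gamma}_yB\iso B$ for such objects, by the analogue of the argument in Lemma~\ref{lem_disc_decomp}/the preceding lemma in Section~\ref{sec_filt} applied to the one-point specialization-closed considerations, using \cite{StevensonActions}*{Proposition~5.7~(4)}). Since $y$ is a single point it lies in at least one of the standard affine charts $Y_{t_i}$ of the cover, whence $\supp_{\D(Y)}B\cie\{y\}\cie Y_{t_i}$ and the first part applies to give $B\xrightarrow{\sim}\S^2 B$ naturally. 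Hence $\mathit{\Gamma}_y\SS(R)$ is $2$-periodic.

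The main obstacle I expect is the bookkeeping needed to make precise the claim that the triangle $\str_Y\xrightarrow{t_i}\mathcal{L}\to\mathcal{C}_i$ has cone supported exactly on the complement of $Y_{t_i}$, and that after applying $-\odot A$ the resulting support computation is legitimate: this requires invoking the explicit identification of $\D(Y)$-supports on $\SS(R)$ and $\SS(Y)$ from \cite{Stevensonclass}*{Section~7}, together with the compatibility of $\odot$ with restriction to opens (\cite{Stevensonclass}*{Lemma~7.3}) and the compatibility data of \cite{StevensonActions}*{Section~8}, which is precisely the ``Essentially this cover together with the compatibility conditions'' remark preceding the statement. A secondary subtlety is checking that the natural isomorphism $A\xrightarrow{\sim}\S^2 A$ produced chart-by-chart is independent of the choice of chart and genuinely natural in $A$; this follows because all the identifications involved ($\mathcal{L}\odot(-)\iso\S^2$, the action, restriction) are natural and the section $t_i$ is a fixed global datum, but it deserves a sentence.
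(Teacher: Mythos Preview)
Your proposal is correct and amounts to the same argument the paper sketches in the paragraph preceding the proposition (the paper gives no separate proof). The only stylistic difference is that the paper phrases things in terms of restriction to the open $Y_{t_i}$ and compatibility of the action with localization (so $t_i$ becomes invertible after restricting), whereas you phrase the same fact dually by computing the support of the cone $\mathcal{C}_i$ and killing $\mathcal{C}_i\odot A$ via the local-to-global principle; these are equivalent, and your version is arguably more explicit. One small cleanup: drop the ``$i_*\str_{Y\setminus Y_{t_i}}$-type object'' description of the cone and just say $\mathcal{C}_i$ is compact with $\supp_{\D(Y)}\mathcal{C}_i$ equal to the vanishing locus $V_i=Y\setminus Y_{t_i}$, so $\mathcal{C}_i\in\mathit{\Gamma}_{V_i}\D(Y)$ and hence $\mathcal{C}_i\odot A\in\mathit{\Gamma}_{V_i}\SS(R)$; combined with $\supp A\cie Y_{t_i}$ this forces $\mathcal{C}_i\odot A=0$ as you say.
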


As a corollary we obtain a geometric proof of a special case of a result due to Jesse Burke \cite{BurkeID}.

\begin{cor}
Suppose $A$ is an object of $\SS(R)$. Then $\Hom(A, \S^{2n}A) \iso 0$ for some $n\geq 0$ if and only if $A\iso 0$.
\end{cor}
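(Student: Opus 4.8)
The corollary should follow by combining the periodicity result of Proposition~\ref{prop_periodic} with the local-to-global principle and the filtration machinery. The ``if'' direction is trivial, so the content is the ``only if'' direction: suppose $\Hom(A,\S^{2n}A) = 0$ for some $n\geq 0$; we want $A\iso 0$. Since the action of $\D(Y)$ on $\SS(R)$ satisfies the local-to-global principle, it suffices to show $\mathit{\Gamma}_y A = 0$ for every $y\in \Sing Y$, as $\supp A = \varnothing$ then forces $A\iso 0$. So fix $y\in \Sing Y$ and work inside the localizing submodule $\mathit{\Gamma}_y\SS(R)$.

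First I would reduce to showing $\Hom(\mathit{\Gamma}_y A, \S^{2n}\mathit{\Gamma}_y A) = 0$. This is where one must be a little careful: one does not directly have a retraction of $A$ onto $\mathit{\Gamma}_y A$. The cleanest route is: the hypothesis $\Hom(A,\S^{2n}A)=0$ implies $\Hom(A, \S^{2n}\mathit{\Gamma}_y A) = 0$ (apply the functor $\mathit{\Gamma}_y$, which is smashing, to the second variable and use that $\S^{2n}\mathit{\Gamma}_y A$ is a summand of $\S^{2n}\mathit{\Gamma}_y \mathbf{1}\odot A$ built functorially from $A$ — more precisely, $\mathit{\Gamma}_y(\S^{2n}A)$ is obtained by smashing, and any map $A\to \S^{2n}\mathit{\Gamma}_y A$ composed with $\S^{2n}\mathit{\Gamma}_y A \to \S^{2n} A$... ) — rather, the robust argument is to note $\Hom(\mathit{\Gamma}_y A, \S^{2n}\mathit{\Gamma}_y A) = \Hom(A, \S^{2n}\mathit{\Gamma}_y A)$ since $\mathit{\Gamma}_y A \to A$ is a $\mathit{\Gamma}_{\mathcal{V}(y)}$-colocalization followed by an $L_{\mathcal{Z}(y)}$-localization and $\S^{2n}\mathit{\Gamma}_y A$ already lies in $\mathit{\Gamma}_y\SS(R)$, so the adjunctions collapse the left variable; and $\Hom(A,\S^{2n}\mathit{\Gamma}_y A)$ is a retract of $\Hom(A,\S^{2n}A)$ via the unit/counit of the idempotent $\mathit{\Gamma}_y\mathbf{1}$ acting on the target. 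Hence this Hom vanishes.

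Now inside $\mathit{\Gamma}_y\SS(R)$, Proposition~\ref{prop_periodic} gives a natural isomorphism $\S^2 \iso \id$ on $\mathit{\Gamma}_y\SS(R)$ — here I use that $\supp_{D(Y)}\mathit{\Gamma}_y A = \{y\}$ lies in some basic affine $Y_{t_i}$ of the cover, so the proposition applies to $\mathit{\Gamma}_y A$. Iterating, $\S^{2n}\mathit{\Gamma}_y A \iso \mathit{\Gamma}_y A$, so $\Hom(\mathit{\Gamma}_y A, \S^{2n}\mathit{\Gamma}_y A) \iso \Hom(\mathit{\Gamma}_y A,\mathit{\Gamma}_y A)$, and the vanishing of the former forces $\id_{\mathit{\Gamma}_y A} = 0$, i.e.\ $\mathit{\Gamma}_y A \iso 0$. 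Since this holds for all $y\in \Sing Y = \supp \SS(R)$, we get $\supp A = \varnothing$, whence $A\iso 0$ by the local-to-global principle as recalled at the end of Section~2.

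**Main obstacle.** The one genuinely delicate point is the bookkeeping in the second paragraph: transferring the hypothesis $\Hom(A,\S^{2n}A)=0$ to $\Hom(\mathit{\Gamma}_y A,\S^{2n}\mathit{\Gamma}_y A)=0$ without an honest splitting of $A$. Everything else is formal once one invokes Proposition~\ref{prop_periodic} and the local-to-global principle. I expect the retraction argument via the tensor-idempotent $\mathit{\Gamma}_y\mathbf{1}$ acting on the target, together with the adjunction identities $\Hom(\mathit{\Gamma}_{\mathcal{V}(y)}X, Y)=\Hom(X,Y)$ for $Y\in \mathit{\Gamma}_{\mathcal{V}(y)}\SS(R)$ and dually for $L_{\mathcal{Z}(y)}$, to dispatch it cleanly.
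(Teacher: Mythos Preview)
You have correctly identified the delicate point, but neither of your proposed arguments for it works. The claimed identity $\Hom(\mathit{\Gamma}_y A, \S^{2n}\mathit{\Gamma}_y A) = \Hom(A, \S^{2n}\mathit{\Gamma}_y A)$ is false: the adjunction for $L_{\mathcal{Z}(y)}$ does reduce the left-hand side to $\Hom(\mathit{\Gamma}_{\mathcal{V}(y)} A, \S^{2n}\mathit{\Gamma}_y A)$, but you cannot then strip $\mathit{\Gamma}_{\mathcal{V}(y)}$ from the source, since $\mathit{\Gamma}_{\mathcal{V}(y)}$ is the \emph{right} adjoint to the inclusion of the torsion subcategory, not the left. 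Your stated ``adjunction identity'' $\Hom(\mathit{\Gamma}_{\mathcal{V}(y)}X, Y)=\Hom(X,Y)$ for $Y\in \mathit{\Gamma}_{\mathcal{V}(y)}\SS(R)$ has the variance backwards. Concretely, already for $\D(R)$ acting on itself with $R$ regular local of positive dimension and $y=\mathfrak{m}$ the closed point, one has $\Hom(\mathit{\Gamma}_\mathfrak{m} R, \mathit{\Gamma}_\mathfrak{m} R)\neq 0$ while $\Hom(R, \mathit{\Gamma}_\mathfrak{m} R)\iso H^0_\mathfrak{m}(R)=0$. Your retract argument fares no better: $\mathit{\Gamma}_y\mathbf{1}$ is in general neither a localization nor a colocalization of $\mathbf{1}$, so there is no natural map in either direction between $\mathbf{1}$ and $\mathit{\Gamma}_y\mathbf{1}$, let alone a splitting. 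In short, the vanishing of $\Hom(A,\S^{2n}A)$ does \emph{not} formally imply the vanishing of $\Hom(\mathit{\Gamma}_y A, \S^{2n}\mathit{\Gamma}_y A)$.

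The paper sidesteps this entirely. Rather than trying to kill the whole group $\Hom(\mathit{\Gamma}_y A, \S^{2n}\mathit{\Gamma}_y A)$, it observes that the hypothesis already kills one \emph{specific} element of $\Hom(A,\S^{2n}A)$: the map $A \to \S^{2n}A$ obtained by acting with the section $t_i^n\colon \str_Y \to \mathcal{L}^{\otimes n}$. Because this is a natural transformation evaluated at $A$, applying the functor $L_{V_i}$ (with $V_i$ the complement of $Y_{t_i}$) carries it to the zero map $L_{V_i}A \to \S^{2n}L_{V_i}A$; but $L_{V_i}$ inverts $t_i$, so this zero map is simultaneously an isomorphism, whence $L_{V_i}A=0$. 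Since the $Y_{t_i}$ cover $Y$, the local-to-global principle finishes. The same reasoning works verbatim with $\mathit{\Gamma}_y$ for $y\in Y_{t_i}$ in place of $L_{V_i}$, which would rescue your pointwise outline; the missing idea is to exploit the \emph{naturality} of the periodicity map from Proposition~\ref{prop_periodic} rather than attempt an adjunction or retraction argument.
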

\begin{proof}
The if direction is clear. For the other direction let $A$ be an object of $\SS(R)$ and $n>0$ (since the case $n=0$ is trivial) an integer such that $\Hom(A,\S^{2n}A) \iso 0$. Then for $1\leq i \leq c$ the map $A\to \S^{2n}A$ induced by $t_i^n$ is zero. Let $V_i$ denote the complement of $Y_{t_i}$. The localization $L_{V_i}$ sends $t_i^n$ to an isomorphism. We conclude that $L_{V_i}A$ is zero for each $i$ and so $A$ is zero by the local-to-global principle.
\end{proof}

\subsection{The refined filtration}
Although it is well known we begin with the observation that the maps in diagram~(\ref{eq_1}) give a functor $\GInj Y \to \GInj R$. We denote the right adjoint of $i_*$, which exists at the level of quasi-coherent sheaves as $i$ is a closed immersion, by $i^!$.

\begin{lem}\label{lem_ginj_functor}
Let $G$ be a Gorenstein injective quasi-coherent sheaf on $Y$. Then $p_*i^!G$ is a Gorenstein injective $R$-module. Thus $p_*i^!$ restricts to a functor
\begin{displaymath}
\psi\colon \GInj Y \to \GInj R.
\end{displaymath}
Moreover, this functor is exact and induces an equivalence $\underline{\psi}$ at the level of stable categories.
\end{lem}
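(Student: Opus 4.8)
The plan is to verify the three assertions in turn: that $p_*i^!$ lands in $\GInj R$, that the resulting functor $\psi$ is exact, and that it descends to an equivalence on stable categories. For the first claim, I would start from a complete injective resolution $E$ of $G$ on $Y$. Since $i$ is a closed immersion, $i^!$ preserves injectivity of quasi-coherent sheaves (it is right adjoint to the exact functor $i_*$, and $i_*$ sends injectives of $\QCoh \mathbb{P}^{c-1}_R$... more directly, $i^!$ of an injective is injective because $i_*$ is exact), and $p$ is affine so $p_*$ is exact and also preserves injectives. Hence $\psi(E) = p_*i^!E$ is a complex of injective quasi-coherent $R$-modules. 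It is acyclic because $i^!$ and $p_*$ are both exact on the relevant categories (for $i^!$ I would use that $E$ is a complex of injectives and that $i^!$ has vanishing higher derived functors on injectives, or equivalently restrict to the affine pieces $Y_{t_i}$ where everything is transparent), and its zeroth syzygy is $p_*i^!G$. This exhibits $p_*i^!G$ as the zeroth syzygy of an acyclic complex of injectives, hence Gorenstein injective by Definition~\ref{defn_ginj}. For exactness of $\psi$ on $\GInj Y$: a conflation in $\GInj Y$ is a short exact sequence of quasi-coherent sheaves; $i^!$ is left exact and $p_*$ is exact, so the only issue is right-exactness of $i^!$ on this sequence, which holds because the quotient is again Gorenstein injective (hence has no obstruction — concretely $R^1i^!$ vanishes on Gorenstein injectives since they are zeroth syzygies of complexes of injectives on which $i^!$ is exact). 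So $\psi$ carries conflations to conflations.

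For the equivalence on stable categories, the key point is to identify $\underline{\psi}$ with a functor we already understand. The cleanest route is to recall from \cite{Stevensonclass}*{Section~10} (in particular the proof of \cite{Stevensonclass}*{Corollary~10.5}) that the composite $p_*i^!$ at the level of singularity categories, or equivalently on $\SS(-)$ via the Krause recollement, realizes the comparison functor $\SS(Y)\to\SS(R)$ used there to transport the classification of localizing subcategories. Since $\SS(Y)\iso\sGInj Y$ and $\SS(R)\iso\sGInj R$ (both $Y$ and $R$ being Gorenstein), and since the functor $\SS(Y)\to\SS(R)$ arising from the diagram~(\ref{eq_1}) is known there to be a localization whose... actually, I expect the sharper statement: the functor on $\SS$-categories induced by $p_*i^!$ is an equivalence onto all of $\SS(R)$. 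I would prove this by checking it is fully faithful and essentially surjective directly, or by citing that it is precisely the equivalence underlying \cite{Stevensonclass}*{Corollary~10.5}. Essential surjectivity and full faithfulness can be reduced, via the local-to-global principle and Proposition~\ref{prop_periodic}, to a statement on each open affine $Y_{t_i}$, where $p_*i^!$ becomes the classical Orlov-type equivalence between the singularity category of $R$ (localized appropriately) and that of the affine hypersurface $Y_{t_i}$.

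The main obstacle I anticipate is the last step: pinning down that $\underline{\psi}$ is an \emph{equivalence} rather than merely a functor, i.e.\ matching $p_*i^!$ on the nose with the equivalence in \cite{Stevensonclass}*{Corollary~10.5}. The adjunction $i_* \dashv i^!$ and the closed-immersion formalism need to be threaded carefully through the recollement identifying $\SS(-)$ with the singularity category, and one must check the unit/counit maps become isomorphisms after stabilization. I would handle this by working locally on the cover $\{Y_{t_i}\}$: there the statement is the known equivalence for an affine complete intersection versus its associated affine hypersurface (Orlov, Eisenbud), and the compatibility of $\psi$ with restriction to opens — which follows from base change for $i^!$ and $p_*$ along the affine opens, together with \cite{Stevensonclass}*{Lemma~7.3} — lets one glue these local equivalences, using that both source and target satisfy the local-to-global principle. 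The verification that $\psi$ itself (not just $\underline\psi$) is well-defined, i.e.\ sends injectives to injectives, is the routine part already covered in establishing the first assertion.
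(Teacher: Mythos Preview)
There is a genuine gap in your treatment of $p_*$. The morphism $p\colon \mathbb{P}^{c-1}_R \to \Spec R$ is projective, not affine (for $c\geq 2$), so your claim that ``$p$ is affine so $p_*$ is exact and also preserves injectives'' fails on both counts. The paper argues differently: $p$ is \emph{flat}, so $p^*$ is exact and hence its right adjoint $p_*$ preserves injectives; and the preservation of acyclicity of complexes of injectives under $p_*$ is not automatic but is extracted from the proof of \cite{KrStab}*{Theorem~1.5}. Similarly, your justification for $i^!$ preserving acyclicity (``vanishing higher derived functors on injectives'') is vacuous---every left exact functor has that property---and does not by itself yield acyclicity of $i^!E$. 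What is actually needed is that $i^!$ has finite cohomological dimension (since $i$ is a regular closed immersion), so the usual truncation argument applies; the paper packages this as \cite{Stevensonclass}*{Lemma~8.5}.

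For the equivalence $\underline{\psi}$, you are citing the wrong result: \cite{Stevensonclass}*{Corollary~10.5} is the classification of localizing subcategories, not the equivalence of singularity categories. The relevant statement is \cite{Stevensonclass}*{Proposition~8.7}, which extends Orlov's equivalence $\D_{\mathrm{sg}}(Y)\simeq \D_{\mathrm{sg}}(R)$ to the big singularity categories $\SS(Y)\simeq \SS(R)$ via exactly the functor $p_*i^!$ on acyclic complexes of injectives. Once you have established that $\psi$ is well defined and exact on $\GInj$, the induced functor on stable categories is identified with this known equivalence directly, with no need for your proposed local gluing argument over the $Y_{t_i}$.
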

\begin{proof}
Let $G$ be a Gorenstein injective quasi-coherent $\str_Y$-module and let $E$ be a complete injective resolution of $G$. By \cite{Stevensonclass}*{Lemma~8.5} the functor $i^!$ sends acyclic complexes of injectives to acyclic complexes of injectives. Thus $i^!E$ is an acyclic complex of injectives on $\mathbb{P}^{c-1}_R$ with zeroth syzygy $i^!G$ which shows that $i^!G$ is Gorenstein injective. Since truncating $E$ gives an injective resolution of $G$ this also shows that $G$ is acyclic for $i^!$ and so the functor $i^!$ is exact when restricted to $\GInj Y$.

An analogous argument shows that $p_*$ sends Gorenstein injectives to Gorenstein injectives and is exact when restricted to $\GInj \mathbb{P}_R^{c-1}$: since $p$ is flat $p_*$ sends injectives to injectives and by the proof of \cite{KrStab}*{Theorem~1.5} $p_*$ sends acyclic complexes of injectives to acyclic complexes.

As we have noted above $\psi$ sends injectives to injectives so induces a functor at the level of stable categories. That this is an equivalence is then just a different interpretation of the extension given in \cite{Stevensonclass}*{Proposition~8.7} of Orlov's equivalence.
\end{proof}

Suppose $G$ is a Gorenstein injective quasi-coherent sheaf on $Y$ and consider the filtration
\begin{displaymath}
0 = G_{\leq -1} \cie G_{\leq 0} \cie \cdots \cie G_{\leq \dim Y-1} \cie G_{\leq \dim Y} = G
\end{displaymath}
of Theorem~\ref{thm_gor_filt}. By the lemma $\psi$ is exact so we get a filtration of $\psi(G)$ by the $\psi(G_{\leq i})$ with factors
\begin{displaymath}
\psi(G_i) \iso \psi(G_{\leq i+1})/\psi(G_{\leq i}) \iso \psi (\coprod_{x\in \mathcal{H}_i^Y} G_x) \iso \coprod_{x\in \mathcal{H}^Y_i} \psi(G_x)
\end{displaymath}
where $\mathcal{H}^Y_i$ is the set of irreducible closed subsets of $Y$ of dimension $i$. The Gorenstein injective module $\psi(G_x)$, corresponding to a point $x$, is injective if and only if $G_x$ is injective. We thus have the following refined filtrations in $\GInj R$.

\begin{thm}\label{cor_ci_filt}
Let $H$ be a Gorenstein injective $R$-module. Then there is an injective $R$-module $E$ such that $H' = H\oplus E$ has a filtration
\begin{displaymath}
0 = H_{\leq -1}' \cie H_{\leq 0}' \cie \cdots \cie H_{\leq \dim Y-1}' \cie H_{\leq \dim Y}' = H'.
\end{displaymath}
Each factor $H_i'$ decomposes as a direct sum indexed by the points $x\in \mathcal{H}^Y_i$ and the modules $H_x'$ are 2-periodic up to injectives in the sense that
\begin{displaymath}
(\Sigma^2 H_x') \oplus I \iso H_x' \oplus J
\end{displaymath}
where $\Sigma$ denotes taking cosyzygies and $I$ and $J$ are injective $R$-modules.
\end{thm}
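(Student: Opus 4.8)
The plan is to transport the filtration of Theorem~\ref{thm_gor_filt} for the Gorenstein scheme $Y$ across the stable equivalence $\underline\psi$ of Lemma~\ref{lem_ginj_functor}, keeping careful track of injective summands. First I would observe that $Y$ is a separated noetherian Gorenstein scheme of finite Krull dimension: it is projective over $\Spec Q$, hence noetherian and separated, and, being a hypersurface in the regular scheme $\mathbb{P}^{c-1}_Q$, it is locally a complete intersection and so Gorenstein. Thus Theorem~\ref{thm_gor_filt} applies to any Gorenstein injective quasi-coherent $\str_Y$-module $G$, and applying the exact functor $\psi$ (Lemma~\ref{lem_ginj_functor}) to the resulting filtration of $G$ gives, exactly as in the discussion preceding the statement, a filtration of $\psi(G)$ in $\GInj R$ of length $\dim Y$ whose $i$th factor is $\coprod_{x\in\mathcal{H}_i^Y}\psi(G_x)$; recall that $\pi_Y(G_x)$ lies in $\mathit{\Gamma}_x\SS(Y)$ and that $G_x$, hence $\psi(G_x)$, is injective whenever $x\notin\Sing Y$.

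Next, to pass from such a $G$ to the given module $H$, I would use that $\underline\psi\colon\sGInj Y\to\sGInj R$, being an equivalence, is essentially surjective: choose a Gorenstein injective $\str_Y$-module $G$ with $\underline\psi(\pi_Y G)\iso\pi_R H$ in $\sGInj R$, where $\pi_Y$ and $\pi_R$ are the projections to the respective stable categories. As $\GInj R$ is a Frobenius category whose projective-injective objects are the injective $\str_R$-modules, two of its objects that become isomorphic in $\sGInj R$ agree up to summands of that kind; hence there are injective $R$-modules $E_1$ and $E$ with $\psi(G)\oplus E_1\iso H\oplus E$. I would keep this $E$, set $H'=H\oplus E\iso\psi(G)\oplus E_1$, and build $H'_{\leq\bullet}$ from the $\psi$-filtration of $\psi(G)$ by enlarging the single summand $\psi(G_{x_0})$ of the bottom factor, for a fixed closed point $x_0\in\mathcal{H}_0^Y$, by the injective module $E_1$; so $H'_x=\psi(G_x)$ for $x\neq x_0$ and $H'_{x_0}=\psi(G_{x_0})\oplus E_1$.

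It remains to check that each $H'_x$ is $2$-periodic up to injectives. If $x\notin\Sing Y$ this is trivial, since $H'_x=\psi(G_x)$ is then injective and so is its cosyzygy. If $x\in\Sing Y$, then because the equivalence $\underline\psi$ intertwines the $\D(Y)$-actions on $\SS(Y)\iso\sGInj Y$ and $\SS(R)\iso\sGInj R$ (by the construction of the $\D(Y)$-module structure on $\SS(R)$ in \cite{Stevensonclass}) we get $\pi_R(\psi(G_x))=\underline\psi(\pi_Y G_x)\in\mathit{\Gamma}_x\SS(R)$, and Proposition~\ref{prop_periodic} tells us $\mathit{\Gamma}_x\SS(R)$ is $2$-periodic, so $\psi(G_x)\iso\Sigma^2\psi(G_x)$ in $\SS(R)$. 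Invoking the Frobenius property once more, this stable isomorphism translates into $\Sigma^2\psi(G_x)\oplus I_x\iso\psi(G_x)\oplus J_x$ for injective $R$-modules $I_x$ and $J_x$; for $x=x_0$ one enlarges $I_{x_0}$ and $J_{x_0}$ to absorb $E_1$, whose cosyzygy is again injective, and obtains the statement for $H'_{x_0}$.

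The hard part will be that last step, specifically securing the input that $\underline\psi$ respects the $\D(Y)$-actions, so that $\psi(G_x)$ genuinely sits inside $\mathit{\Gamma}_x\SS(R)$ and the local periodicity of Proposition~\ref{prop_periodic} is applicable. The rest is the (routine, if slightly fiddly) translation between isomorphisms in the triangulated stable category and isomorphisms of honest modules up to injective direct summands, together with the rearrangement needed to distribute the discrepancy module $E_1$ among the factors.
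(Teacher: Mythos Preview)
Your proposal is correct and follows essentially the same route as the paper: lift $H$ along the stable equivalence $\underline\psi$ to a Gorenstein injective $G$ on $Y$, apply Theorem~\ref{thm_gor_filt} to $G$, push the filtration down via the exact functor $\psi$, and invoke Proposition~\ref{prop_periodic} for the $2$-periodicity of the factors. The paper's proof is terser and simply asserts that $H\oplus E$ lies in the essential image of $\psi$; your version, tracking the two injective discrepancies $E$ and $E_1$ separately and absorbing $E_1$ into a single closed-point factor, is more careful and arguably more honest about what the Frobenius property actually gives.

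On what you flag as the hard part: you need not worry much. The $\D(Y)$-action on $\SS(R)$ in \cite{Stevensonclass} is set up precisely so that the equivalence $\underline\psi$ of Proposition~8.7 there is $\D(Y)$-linear (the action on $\SS(R)$ is transported from $\SS(Y)$ through this equivalence, or equivalently both sides carry compatible $\D(Y)$-actions by construction). Hence $\underline\psi(\mathit{\Gamma}_x\SS(Y))=\mathit{\Gamma}_x\SS(R)$ holds essentially by definition, and $\pi_R\psi(G_x)\in\mathit{\Gamma}_x\SS(R)$ follows immediately. So Proposition~\ref{prop_periodic} applies directly, exactly as the paper asserts without further comment.
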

\begin{proof}
As explained in the preamble to the corollary the result essentially follows from Theorem~\ref{thm_gor_filt} by using Lemma~\ref{lem_ginj_functor}. Since $\underline{\psi}$ is an equivalence we can find an injective $E$ such that $H'= H\oplus E$ is in the essential image of $\psi$ and the existence of the claimed filtration follows from the previous discussion. The 2-periodicity, up to injectives, of the factors is a consequence of Proposition~\ref{prop_periodic}.
\end{proof}


\section{The associated spectral sequence}
The notation in this section is as in Sections \ref{sec_disc} and \ref{sec_filt}: $\mathcal{T}$ is a rigidly-compactly generated tensor triangulated category such that $\mathcal{T}$ is the homotopy category of some stable monoidal model category, $\Spc \mathcal{T}^c$ is noetherian, and we fix an action of $\mathcal{T}$ on some compactly generated triangulated category $\mathcal{K}$. We outline how to obtain local-to-global spectral sequences from the filtration associated to a dimension function on $\Spc \mathcal{T}^c$. Although the construction of the spectral sequence is entirely standard we believe it is worth making somewhat explicit. We then make some further remarks on interesting features of the spectral sequence for local complete intersections.

Let us fix some dimension function $\dim\colon \Spc \mathcal{T}^c \to \int\cup \{\pm \infty\}$. As in Section~\ref{sec_filt} this provides a filtration of $\mathcal{K}$
\begin{displaymath}
0 \cie \mathit{\Gamma}_{\leq -\infty}\mathcal{K} \cie \cdots \cie \mathit{\Gamma}_{\leq d-1}\mathcal{K} \cie \mathit{\Gamma}_{\leq d}\mathcal{K} \cie \cdots \cie \mathit{\Gamma}_{\leq \infty}\mathcal{K} = \mathcal{K}.
\end{displaymath}
and gives for each object $A$ of $\mathcal{K}$ a tower
\begin{displaymath}
\xymatrix{
\cdots \ar[r] &\mathit{\Gamma}_{\leq d-2}A \ar[r] & \mathit{\Gamma}_{\leq d-1}A \ar[r] \ar[d] & \mathit{\Gamma}_{\leq d}A \ar[r] \ar[d] & \cdots &\\
&& A_{d-1} \ar[ul]^{\S} & A_{d} \ar[ul]^{\S}
}
\end{displaymath}
Let $H\colon \mathcal{K} \to \mathcal{A}$ be a homological functor into an abelian category $\mathcal{A}$ having all small coproducts. As usual $H^i(-)$ denotes the composite $H\circ \S^i$ i.e., for $A\in \mathcal{K}$ we set $H^i(A) = H(\S^i A)$.

Now let us take an object $A$ of $\mathcal{K}$ and apply $H$ to its tower. We can organise the resulting information in the following commutative diagram (where the dashed arrows indicate an example of one of the long exact sequences corresponding to the triangles occuring in the tower for $A$)

\begin{displaymath}
\xymatrix{
H^{m-1}(\mathit{\Gamma}_{\leq n}A) \ar@{-->}[r]^-j \ar[d]_-i & H^{m-1}(A_n) \ar@{-->}[r]^-k & H^{m}(\mathit{\Gamma}_{\leq n-1}A) \ar@{-->}[d]_-i \ar[r]^-j & H^{m}(A_{n-1}) 
\\
H^{m-1}(\mathit{\Gamma}_{\leq n+1}A) \ar[r]^-j \ar[d]_-i & H^{m-1}(A_{n+1}) \ar[r]^-k & H^{m}(\mathit{\Gamma}_{\leq n}A) \ar[d]_-i \ar@{-->}[r]^-j & H^{m}(A_{n}) 
\\
H^{m-1}(\mathit{\Gamma}_{\leq n+2}A) \ar[r]^-j & H^{m-1}(A_{n+2}) \ar[r]^-k & H^{m}(\mathit{\Gamma}_{\leq n+1}A) \ar[r]^-j & H^{m}(A_{n+1}) 
\\
 }
\end{displaymath}

We now produce an exact couple from this diagram in the obvious way. We set
\begin{displaymath}
D(A,H)^{\bullet,\bullet} = \bigoplus_{p,q} H^{p+q}(\mathit{\Gamma}_{\leq q}A) \quad \text{and} \quad E(A,H)^{\bullet,\bullet} = \bigoplus_{p,q} = H^{p+q}(A_q)
\end{displaymath}
and consider the diagram
\begin{displaymath}
\xymatrix{
D(A,H) \ar[rr]^-i & & D(A,H) \ar[dl]^-j \\
& E(A,H) \ar[ul]^-k &
}
\end{displaymath}
where we will omit the $(A,H)$ if it is clear from the context. The morphisms here are induced in the obvious way from the maps $i,j,k$ occurring in the diagram organising the values of $H$ on the tower associated to $A$. It is easily verified that this is an exact couple with $\deg i = (1,-1)$, $\deg j = (0,0)$ and $\deg k = (2,-1)$. As usual we set $d = jk$ and can form the derived exact couple. Iterating this process gives a cohomological spectral sequence, starting on the second page, with $r$th differential $d_{r}$ of degree $(r,1-r)$.

The construction associating to each object $A$ of $\mathcal{K}$ the exact couple $(D,E)$ is functorial; this is a direct consequence of the functoriality of the towers.

The best case scenario is summarised in the following lemma which follows from the results we have proved in Section \ref{sec_filt} together with standard considerations concerning spectral sequences.

\begin{lem}
Suppose $\mathcal{K}$ has finite dimension with respect to the function $\dim$ in the sense that there are $m_{-},m_{+}\in \int$ with
\begin{displaymath}
\mathit{\Gamma}_{\leq m_{-}}\mathcal{K} = 0 \quad \text{and} \quad \mathit{\Gamma}_{\leq m_{+}}\mathcal{K} = \mathcal{K}.
\end{displaymath}
Then for any object $A$ of $\mathcal{K}$ and homological functor $H\colon \mathcal{K} \to \mathcal{A}$ the associated spectral sequence is zero outside of a horizontal strip between $m_{-}$ and $m_+$. In particular it converges to $H^*(A)$ i.e.
\begin{displaymath}
E_2^{p,q} = H^{p+q}(A_q) \rimp H^{p+q}(A).
\end{displaymath}
Furthermore, if $H$ preserves coproducts there are isomorphisms 
\begin{displaymath}
H^{p+q}(A_q) \iso H^{p+q}(\coprod_{x\in \mathcal{H}_q} \mathit{\Gamma}_x A) \iso \bigoplus_{x\in \mathcal{H}_q} H^{p+q}(\mathit{\Gamma}_x A).
\end{displaymath}
\end{lem}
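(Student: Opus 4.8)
The plan is to combine the vanishing of the factors $A_q$ outside a finite range of filtration degrees with the textbook convergence behaviour of a spectral sequence supported in a horizontal strip. First I would observe that the hypotheses force $\mathit{\Gamma}_{\leq q}\mathcal{K} = 0$ for every $q\leq m_{-}$ and $L_{\leq q-1}\mathcal{K} = 0$ for every $q > m_{+}$, the latter because $\mathit{\Gamma}_{\leq q-1}\mathcal{K} = \mathcal{K}$ as soon as $q-1\geq m_{+}$. Consequently, for any $A\in\mathcal{K}$ the factor $A_q = \mathit{\Gamma}_q A = \mathit{\Gamma}_{\leq q}L_{\leq q-1}A$ vanishes unless $m_{-} < q \leq m_{+}$: when $q\leq m_{-}$ already $\mathit{\Gamma}_{\leq q}A = 0$, so the defining triangle $\mathit{\Gamma}_{\leq q-1}A \to \mathit{\Gamma}_{\leq q}A \to A_q \to \S\mathit{\Gamma}_{\leq q-1}A$ collapses; when $q > m_{+}$ we have $L_{\leq q-1}A = 0$ and hence $A_q = \mathit{\Gamma}_{\leq q}L_{\leq q-1}A = 0$. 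Since $E(A,H)^{p,q} = H^{p+q}(A_q)$, the $E$-page, and therefore every later page, is concentrated in the rows $m_{-} < q \leq m_{+}$, which is the asserted horizontal strip.

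Next I would record that this finiteness in the row direction is exactly what gives convergence. For fixed total degree $n$ the groups $D(A,H)^{p,q}$ with $p+q = n$ stabilise in both directions: $D^{p,q} = H^{n}(\mathit{\Gamma}_{\leq q}A)$ is $0$ for $q\leq m_{-}$ and equals $H^{n}(A)$ for $q\geq m_{+}$, since $\mathit{\Gamma}_{\leq q}A\iso A$ there. Thus the filtration of $H^{n}(A)$ extracted from the exact couple is finite, exhaustive and Hausdorff. On the other hand, in each fixed bidegree the differential $d_r$ shifts the row index $q$ by an amount growing with $r$, so once $r-1 > m_{+}-m_{-}$ it has source or target outside the strip and vanishes; hence $E_r^{p,q}$ is eventually constant and equals $E_\infty^{p,q}$. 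Invoking the standard identification of $E_\infty$ with the associated graded of this finite filtration on $D$ (e.g.\ the usual exact-couple formalism, as in Weibel) yields $E_2^{p,q} = H^{p+q}(A_q) \rimp H^{p+q}(A)$; I would cite this rather than reprove it.

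Finally, for the last isomorphism I would invoke Theorem~\ref{thm_gen_filt} (equivalently Lemma~\ref{lem_disc_decomp} together with the tower) to get $A_q \iso \coprod_{x\in\mathcal{H}_q}\mathit{\Gamma}_x A$ in $\mathcal{K}$, and then apply $H^{p+q}(-) = H(\S^{p+q}(-))$. As $\S^{p+q}$ is an autoequivalence it commutes with coproducts, and $H$ preserves small coproducts by hypothesis while $\mathcal{A}$ has them, so $H^{p+q}\bigl(\coprod_{x\in\mathcal{H}_q}\mathit{\Gamma}_x A\bigr) \iso \bigoplus_{x\in\mathcal{H}_q} H^{p+q}(\mathit{\Gamma}_x A)$, as claimed.

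I do not anticipate a genuine obstacle: the real content is the vanishing of $A_q$ off the strip, which is immediate from the construction in Section~\ref{sec_filt}, after which everything is the standard behaviour of a spectral sequence supported in finitely many rows. The only point deserving a moment's care is checking that the abutment filtration produced by the exact couple coincides with the one read off from the tower $\mathit{\Gamma}_{\leq q}A\to\mathit{\Gamma}_{\leq q+1}A$, and that the phrase ``starting on the second page'' is harmless — that no $\lim^{1}$ or completeness issue intervenes — which it does not precisely because the filtration is finite.
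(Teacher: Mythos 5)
Your proof is correct and takes essentially the same route the paper intends: the paper states this lemma without proof, remarking only that it ``follows from the results we have proved in Section~\ref{sec_filt} together with standard considerations concerning spectral sequences,'' and you have simply filled in those standard details. The key points — vanishing of $A_q$ outside the strip $m_- < q \leq m_+$, stabilization of the $D$-terms and eventual vanishing of the differentials due to the $(r,1-r)$-degree shift, and the coproduct decomposition from Theorem~\ref{thm_gen_filt} — are all exactly what the paper has in view.
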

Thus if $H$ is coproduct preserving the spectral sequences connects the local, with respect to the dimension function on $\Spc \mathcal{T}^c$, values of $H^*$ to the global value of $H^*$. This affords the possibility of applying the following trivial observation.

\begin{lem}\label{lem_hom_local}
Suppose $H$ is a coproduct preserving homological functor on $\mathcal{K}$ taking values in a cocomplete abelian category, let $\mathcal{L}$ be a minimal non-zero localizing subcategory of $\mathcal{K}$, and let $A$ be a non-zero object in $\mathcal{L}$. Then $H^i(A)$ vanishes for all $i\in \int$ if and only if for every $B\in \mathcal{L}$ and all $i\in \int$ the $H^i(B)$ vanish.
\end{lem}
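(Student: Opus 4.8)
The plan is essentially immediate, since one implication is vacuous: if $H^i(B)=0$ for every $B\in\mathcal{L}$ and every $i$, then in particular $H^i(A)=0$ for all $i$ because $A\in\mathcal{L}$. So the only content is the converse, and the strategy there is to realise the class of $H$-acyclic objects as a localizing subcategory and then play it off against the minimality of $\mathcal{L}$.

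First I would introduce the full subcategory
\[
\mathcal{N}=\{C\in\mathcal{K}\;\vert\; H^i(C)=0\ \text{for all}\ i\in\int\}
\]
and check that it is a localizing subcategory of $\mathcal{K}$. It is closed under shifts since $H^i(\S^{\pm1} C)=H^{i\pm1}(C)$; it satisfies the two-out-of-three property for triangles because $H$ is homological, so a triangle $X\to Y\to Z\to\S X$ induces a long exact sequence $\cdots\to H^i(X)\to H^i(Y)\to H^i(Z)\to H^{i+1}(X)\to\cdots$ and vanishing of $H^\ast$ on two of the three terms forces it on the third; and it is closed under arbitrary coproducts because $H$ preserves coproducts and a coproduct of zero objects in the (cocomplete) target abelian category is zero. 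Thus $\mathcal{N}$ is a triangulated subcategory closed under coproducts, hence localizing (closure under direct summands being automatic, either by the usual Eilenberg-swindle argument or directly from the splitting triangle).

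Next I would observe that the hypothesis $H^i(A)=0$ for all $i$ says precisely $A\in\mathcal{N}$, so $\langle A\rangle_\mathrm{loc}\cie\mathcal{N}$. On the other hand $\langle A\rangle_\mathrm{loc}$ is a non-zero localizing subcategory of $\mathcal{K}$ (it contains $A\neq0$) contained in $\mathcal{L}$, since $\mathcal{L}$ is localizing and contains $A$; minimality of $\mathcal{L}$ then forces $\langle A\rangle_\mathrm{loc}=\mathcal{L}$. Combining the two inclusions gives $\mathcal{L}\cie\mathcal{N}$, which is exactly the assertion that $H^i(B)=0$ for every $B\in\mathcal{L}$ and every $i\in\int$.

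The only step needing even a moment's care is the closure of $\mathcal{N}$ under coproducts, which is where coproduct-preservation of $H$ and cocompleteness of the target genuinely enter; everything else is formal. I do not expect any real obstacle — this is why the statement is billed as a trivial observation — the point of isolating it is simply that it will be convenient to invoke when discussing minimal localizing submodules and the associated spectral sequences.
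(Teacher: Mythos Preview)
Your argument is correct and is precisely the intended one: the paper actually omits the proof entirely, introducing the lemma as a ``trivial observation,'' and your write-up is the natural fleshing-out---the $H^\ast$-acyclic objects form a localizing subcategory (this is where coproduct preservation and cocompleteness are used), and minimality of $\mathcal{L}$ together with $0\neq A\in\mathcal{L}\cap\mathcal{N}$ forces $\mathcal{L}=\langle A\rangle_\mathrm{loc}\cie\mathcal{N}$.
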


\subsection{Local complete intersections}\label{sec_ss_ci}

We now make some comments on the spectral sequence in the special case that $(R,\mathfrak{m},k)$ is a local complete intersection of codimension $c$, $\mathcal{T} = \D(Y)$ where $Y$ is a generic hypersurface as in the discussion before Lemma \ref{lem_ginj_functor}, and $\mathcal{K} = \SS(R)$ (although one could work more generally). Here the local periodicity of $\SS(R)$ allows us to play certain games, using hypersurface sections, on the second page of the spectral sequence for a coproduct preserving homological functor.

For simplicity let us assume that $R$ has an isolated singularity i.e., for all non-closed $\mathfrak{p}\in \Spec R$ the ring $R_\mathfrak{p}$ is regular. We recall from \cite{Stevensonclass}*{Corollary~10.5} that the action of $\D(Y)$ on $\SS(R)$ gives a bijection between subsets of $\Sing Y = \mathbb{P}^{c-1}_k$ and localizing subcategories of $\SS(R)$. Let us fix some coproduct preserving homological functor $H$ on $\SS(R)$, which we may as well assume lands in the category of abelian groups, and an object $A\in \SS(R)$. Let $E$ be the associated spectral sequence for a dimension function assigning to a point of $\Sing Y$ the Krull dimension of its closure in $\Sing Y$ (we can obtain such a function by reindexing the Krull dimension function on $Y$). We have
\begin{displaymath}
E_2^{p,q} = 0 \;\; \text{for} \;\; q<0 \;\; \text{and} \;\; q>c-1
\end{displaymath}
so the $E_2$ page lives in a horizontal strip of width $c$.

We now consider the implications of Section \ref{ssec_periodicity}. We begin with the following easy consequence.

\begin{lem}\label{lem_ss_periodic}
The rows of the $E_2$ page are $2$-periodic i.e., there are isomorphisms
\begin{displaymath}
E_2^{p,q} \stackrel{\sim}{\to} E_2^{p+2,q}
\end{displaymath}
for all $p$ and $q$.
\end{lem}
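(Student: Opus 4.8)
The plan is to deduce the $2$-periodicity of the rows of the $E_2$ page directly from the local $2$-periodicity established in Proposition~\ref{prop_periodic}, transported to the level of the homological functor $H$. Recall that $E_2^{p,q} = \bigoplus_{x\in \mathcal{H}_q} H^{p+q}(\mathit{\Gamma}_x A)$, where $\mathcal{H}_q$ is the set of points of $\Sing Y = \mathbb{P}^{c-1}_k$ whose closure has Krull dimension $q$. So it suffices to produce, for each such $x$, a natural isomorphism $H^{p+q}(\mathit{\Gamma}_xA) \iso H^{p+q+2}(\mathit{\Gamma}_xA)$ and then take the coproduct over $x\in \mathcal{H}_q$.

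First I would fix a point $x\in \Sing Y$. Since $x$ lies in $\Sing Y = \mathbb{P}^{c-1}_k$, it lies in at least one of the standard affine opens $Y_{t_i}$; equivalently $\supp_{D(Y)}\mathit{\Gamma}_xA \cie \{x\} \cie Y_{t_i}$ for some $i$. Proposition~\ref{prop_periodic} then applies to the object $\mathit{\Gamma}_xA$: the section $t_i$ gives a natural isomorphism $\mathit{\Gamma}_xA \stackrel{\sim}{\to} \S^2\mathit{\Gamma}_xA$ in $\SS(R)$. Applying the homological functor $H^{p+q}$ to this isomorphism yields $H^{p+q}(\mathit{\Gamma}_xA) \iso H^{p+q}(\S^2\mathit{\Gamma}_xA) = H^{p+q+2}(\mathit{\Gamma}_xA)$. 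Summing over $x\in \mathcal{H}_q$ gives the isomorphism $E_2^{p,q} \stackrel{\sim}{\to} E_2^{p+2,q}$.

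The main obstacle is a bookkeeping point rather than a conceptual one: I should check that the isomorphism so obtained is the ``right'' map, i.e.\ that it genuinely arises from multiplication by a section of $\mathcal{L}^{\otimes?}$ coherently across the spectral sequence, rather than being merely an abstract isomorphism of groups. In fact for the statement as given---just the existence of isomorphisms $E_2^{p,q} \stackrel{\sim}{\to} E_2^{p+2,q}$---no such coherence is needed, so the argument above suffices. One small subtlety worth a remark is that different points $x\in \mathcal{H}_q$ may require different indices $i$ (different affine charts), but this causes no problem since we only need, for each $x$ separately, some chart containing it; the $2$-periodicity isomorphism for $\mathit{\Gamma}_xA$ does not depend on which valid $i$ is chosen, by the last sentence of the preamble to Proposition~\ref{prop_periodic}. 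I would therefore present the proof in essentially three lines: invoke the description of $E_2^{p,q}$ as a coproduct of the $H^{p+q}(\mathit{\Gamma}_xA)$, apply Proposition~\ref{prop_periodic} termwise, and pass to the coproduct.
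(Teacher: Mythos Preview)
Your proof is correct and matches the paper's approach: invoke Proposition~\ref{prop_periodic} to obtain $\mathit{\Gamma}_xA \iso \S^2\mathit{\Gamma}_xA$ for each $x\in\mathcal{H}_q$, assemble these into $A_q\iso\S^2A_q$, and apply $H$. One small correction to your aside: the periodicity isomorphism for $\mathit{\Gamma}_xA$ \emph{does} depend on which chart $Y_{t_i}$ you pick (different sections $t_i$ give isomorphisms differing by a unit), but as you rightly observe this is irrelevant here since only the existence of an isomorphism is being asserted---indeed the paper explicitly notes afterward that these isomorphisms are not natural when $c>1$.
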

\begin{proof}
In Proposition \ref{prop_periodic} we learned that, for each $y\in \Sing Y$, the subcategory $\mathit{\Gamma}_y\SS(R)$ is 2-periodic. Thus
\begin{displaymath}
A_q \iso \bigoplus_{\substack{y\in \mathbb{P}^{c-1}_k \\ \dim \mathcal{V}(y) = q}} \mathit{\Gamma}_yA \iso \bigoplus_{\substack{y\in \mathbb{P}^{c-1}_k \\ \dim \mathcal{V}(y) = q}} \S^2 \mathit{\Gamma}_yA \iso \S^2 A_q.
\end{displaymath}
It follows that $H^{p+q}(A_q) \iso H^{p+2+q}(A_q)$ as claimed.
\end{proof}

It is important to point out that these isomorphisms are \emph{not} natural if $c>1$ and that the differentials are \emph{not} in general periodic. However, we can still use this periodicity to move information about a given differential around; our goal is to formulate precisely what we mean by this. We will do this in terms of self-maps of the spectral sequence. As in Section \ref{ssec_periodicity} we denote by $\mathcal{L}$ the line bundle $u^*\str(1)_{\mathbb{P}^{c-1}_Q}$ on $Y$ and recall that it acts on $\SS(R)$ by $\S^2$.

\begin{lem}\label{lem_ss_map}
Let $s$ be a section of $\mathcal{L}^{\otimes n}$. Then $s$ induces a morphism of spectral sequences $\tilde{s}\colon E \to E$ of degree $(2n,0)$ via the natural morphisms
\begin{displaymath}
\xymatrix{
E(A,H)_2^{p,q} \ar[rr]^-{H^{p+q}(s)} && E(\mathcal{L}^{\otimes n} \otimes A,H)_2^{p,q} \iso E(A,H)_2^{p+2n,q}.
}
\end{displaymath}
\end{lem}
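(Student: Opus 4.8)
The strategy is to observe that the section $s$ of $\mathcal{L}^{\otimes n}$ induces, by the action of $\D(Y)$ on $\SS(R)$, a natural transformation $s\colon \id_{\SS(R)} \iso \str_Y \odot(-) \to \mathcal{L}^{\otimes n}\odot(-) \iso \S^{2n}$ of endofunctors on $\SS(R)$ (using that $\mathcal{L}$ acts as $\S^2$ via \cite{Stevensonclass}*{Lemma~7.8}, hence $\mathcal{L}^{\otimes n}$ as $\S^{2n}$). The key point is that this natural transformation is compatible with the acyclization functors $\mathit{\Gamma}_{\leq q}$ defining the filtration: for any specialization-closed subset $\mathcal{V}$ of $\Sing Y$, the functor $\mathit{\Gamma}_\mathcal{V}$ is given by $\mathit{\Gamma}_\mathcal{V}\mathbf{1}\odot(-)$, and since $\mathcal{L}^{\otimes n}\otimes \mathit{\Gamma}_\mathcal{V}\mathbf{1} \iso \mathit{\Gamma}_\mathcal{V}(\mathcal{L}^{\otimes n}) \iso \mathit{\Gamma}_\mathcal{V}\mathbf{1}\otimes\mathcal{L}^{\otimes n}$ (the tensor idempotents commute with any object up to the obvious isomorphism, $\mathcal{L}^{\otimes n}$ being invertible), the transformation $s$ commutes with each $\mathit{\Gamma}_{\leq q}$ up to coherent natural isomorphism. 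Consequently $s$ induces a map of the whole tower of Theorem~\ref{thm_filt} for $A$ to the tower for $\mathcal{L}^{\otimes n}\otimes A$, compatibly with the natural isomorphism identifying the latter tower with a shift of the tower for $A$.

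From here the argument is purely formal. First I would spell out that a morphism of towers induces a morphism of the associated exact couples $(D,E)$, and hence a morphism of the associated spectral sequences, of the same internal degree as the map on each term. Applying $H^{p+q}(-)$ to the tower map gives on the $E$-page the map $H^{p+q}(s)\colon H^{p+q}(A_q) \to H^{p+q}((\mathcal{L}^{\otimes n}\otimes A)_q)$; here we use that the $q$th factor of the tower of $\mathcal{L}^{\otimes n}\otimes A$ is $\mathcal{L}^{\otimes n}\otimes A_q$, which follows because $\mathit{\Gamma}_q = \mathit{\Gamma}_{\leq q}L_{\leq q-1}$ commutes with $\mathcal{L}^{\otimes n}\otimes(-)$ by the same idempotency argument. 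Next I would invoke the identification $E(\mathcal{L}^{\otimes n}\otimes A,H)_2^{p,q}\iso E(A,H)_2^{p+2n,q}$: since $\mathcal{L}^{\otimes n}\otimes A_q \iso \S^{2n}A_q$, one has $H^{p+q}(\mathcal{L}^{\otimes n}\otimes A_q)\iso H^{p+q}(\S^{2n}A_q) = H^{p+2n+q}(A_q) = E(A,H)_2^{p+2n,q}$, and this identification is compatible with differentials because the shift-by-$\mathcal{L}^{\otimes n}$ autoequivalence of $\SS(R)$ carries the tower of $A$ to the tower of $\mathcal{L}^{\otimes n}\otimes A$. Composing, $\tilde s$ has bidegree $(2n,0)$ as claimed, and it is a genuine morphism of spectral sequences because it arises from a morphism of exact couples, so it commutes with all the $d_r$.

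The main obstacle—really the only subtle point—is verifying that the identification $\mathcal{L}^{\otimes n}\otimes \mathit{\Gamma}_{\leq q}(-)\iso \mathit{\Gamma}_{\leq q}(\mathcal{L}^{\otimes n}\otimes(-))$ can be chosen \emph{coherently across all $q$} and compatibly with the connecting maps $\mathit{\Gamma}_{\leq q-1}\to\mathit{\Gamma}_{\leq q}$, so that $s$ really induces a map of towers and not merely a compatible family of maps on individual factors; this is where one must be a little careful, though it follows from the functoriality and the fact that the idempotent triangles $\mathit{\Gamma}_\mathcal{V}\mathbf{1}\to\mathbf{1}\to L_\mathcal{V}\mathbf{1}$ are natural and tensoring with the invertible object $\mathcal{L}^{\otimes n}$ is exact. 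Given the machinery already in place (in particular the compatibility of the action with the filtration, used throughout Section~\ref{sec_filt}), this is routine but deserves an explicit sentence. I expect the write-up to be short, essentially just assembling these observations and citing the standard functoriality of exact couples.
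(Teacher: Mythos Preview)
Your proposal is correct and follows exactly the approach the paper takes: the paper's proof is the single sentence ``This is immediate: one tensors everything in sight with the morphism $\str_Y \to \mathcal{L}^{\otimes n}$ corresponding to $s$ and uses the last lemma,'' and your write-up is a careful unpacking of precisely that, including the point that tensoring with $s$ yields a map of towers (hence of exact couples) and that the identification $E(\mathcal{L}^{\otimes n}\otimes A,H)_2^{p,q}\iso E(A,H)_2^{p+2n,q}$ comes from $\mathcal{L}^{\otimes n}$ acting as $\S^{2n}$. The coherence concern you flag is real but, as you say, routine from the naturality of the idempotent triangles and exactness of tensoring with an invertible object; the paper simply absorbs it into ``tensor everything in sight.''
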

\begin{proof}
This is immediate: one tensors everything in sight with the morphism $\str_Y \to \mathcal{L}^{\otimes n}$ corresponding to $s$ and uses the last lemma.
\end{proof}



\begin{prop}
Suppose the residue field $k$ of $R$ is infinite. If the differential $d_{2}^{p,q}$ is injective then $d_{2}^{p-2n,q}$ is injective for all $n\geq 0$. Similarly if $d_{2}^{p,q}$ is surjective then $d_{2}^{p+2n,q}$ is surjective for all $n\geq 0$.
\end{prop}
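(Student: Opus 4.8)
The plan is to exploit the self-map $\tilde{s}$ of the spectral sequence constructed in Lemma~\ref{lem_ss_map}, using a cleverly chosen section $s$ of $\mathcal{L}^{\otimes n}$, and to argue that composing with $\tilde{s}$ induces an isomorphism on the relevant $E_2$-entries. The key observation is that $\tilde{s}$ is a \emph{morphism of spectral sequences} of degree $(2n,0)$, so it commutes with the differentials: we get a commutative square relating $d_2^{p-2n,q}$ and $d_2^{p,q}$ via the maps induced by $s$ on source and target. If the vertical maps in this square (the maps $H^{*}(s)$, after identifying $E(\mathcal{L}^{\otimes n}\otimes A,H)$ with a shifted copy of $E(A,H)$) are isomorphisms, then injectivity of $d_2^{p,q}$ forces injectivity of $d_2^{p-2n,q}$, and dually surjectivity transfers in the other direction. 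So the whole problem reduces to: \emph{find a section $s$ of some $\mathcal{L}^{\otimes n}$ whose induced map on the relevant pieces of the $E_2$-page is an isomorphism.}

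First I would reduce the question to the individual summands $\mathit{\Gamma}_y A$. Recall $E_2^{p,q} = \bigoplus_{\dim\mathcal{V}(y)=q} H^{p+q}(\mathit{\Gamma}_y A)$, the sum running over $y\in \Sing Y = \mathbb{P}^{c-1}_k$. For a fixed such $y$, Proposition~\ref{prop_periodic} says that if $\supp_{D(Y)}(\mathit{\Gamma}_y A)\cie Y_{t_i}$ then $t_i\colon \str_Y\to\mathcal{L}$ induces a natural isomorphism $\mathit{\Gamma}_y A\xrightarrow{\sim}\S^2\mathit{\Gamma}_y A$; more generally any section $s$ of $\mathcal{L}^{\otimes n}$ that does not vanish at $y$ (i.e.\ whose restriction to a neighbourhood of $y$ is a unit times $t_i^{\otimes n}$) induces an isomorphism on $\mathit{\Gamma}_y$. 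The obstacle is that a single $t_i^{\otimes n}$ vanishes along the hyperplane $\{t_i=0\}\subset\mathbb{P}^{c-1}_k$, so it kills the summands indexed by points lying on that hyperplane. Since $k$ is infinite, however, the points $y$ with $\dim\mathcal{V}(y)=q$ contributing to a fixed finite-dimensional group $E_2^{p,q}$ together with those relevant to the differential $d_2^{p,q}$ (which also involves $E_2^{p+2,q-1}$) form a subset that can be avoided: a general linear section $s=\sum a_i t_i$, $a_i\in k$, is nonvanishing at any prescribed finite set of closed points, and in fact at every point of a given irreducible closed subset not contained in a hyperplane. More carefully, I would argue that the map $H^{p+q}(s)$ on $E_2^{p,q}$ is an isomorphism whenever $s$ is nonvanishing on the (finite, by local finite-dimensionality and the fact that only finitely many $\mathit{\Gamma}_y A$ are nonzero in a suitable sense — or more honestly, on the relevant support) locus, using that over the open set $Y_s = \{s\neq 0\}$ multiplication by $s$ is invertible and $\mathit{\Gamma}_y(-)$ for $y\in Y_s$ only sees this open set via \cite{StevensonActions}*{Lemma~8.1} on restriction to opens; thus $H^{p+q}(s)$ restricted to the summand of $y$ is an isomorphism, and summing gives an isomorphism on the whole of $E_2^{p,q}$ as long as every contributing $y$ lies in $Y_s$.

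Putting this together: given the differential $d_2^{p,q}\colon E_2^{p,q}\to E_2^{p+2,q-1}$, choose a linear section $s$ of $\mathcal{L}$ (so $n=1$) nonvanishing at all points $y$ indexing nonzero summands of either $E_2^{p,q}$ or $E_2^{p+2,q-1}$ — possible because $k$ is infinite and these are avoidable loci — and iterate to get sections of $\mathcal{L}^{\otimes n}$. Then $\tilde{s}^n\colon E\to E$ has degree $(2n,0)$, is an isomorphism on the $E_2$-entries in question, and by naturality fits into
\[
\xymatrix{
E_2^{p-2n,q} \ar[r]^-{d_2} \ar[d]_-{\wr} & E_2^{p-2n+2,q-1} \ar[d]^-{\wr} \\
E_2^{p,q} \ar[r]^-{d_2} & E_2^{p+2,q-1}.
}
\]
If the bottom arrow is injective, so is the left vertical composed with it, hence so is the top arrow; dually for surjectivity, running the square with $p$ replaced by $p+2n$ and reading it the other way. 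I expect the main obstacle to be the careful bookkeeping of \emph{which} section to pick — one must ensure a single section (or a single power $\mathcal{L}^{\otimes n}$ with a suitable section) works simultaneously for source and target of the differential across the finitely many relevant points, and that the identification $E(\mathcal{L}^{\otimes n}\otimes A,H)\iso E(A,H)[2n,0]$ of Lemma~\ref{lem_ss_map} is compatible with the restriction-to-opens argument that makes $H^{*}(s)$ an isomorphism on each summand; the infinitude of $k$ is exactly what is needed to make the avoidance possible, which is why it appears as a hypothesis.
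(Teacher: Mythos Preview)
Your overall strategy is right --- use the self-map $\tilde{s}$ from Lemma~\ref{lem_ss_map} and the commutative square relating the two differentials --- but there is a genuine gap in how you choose the section. You want a section $s$ of $\mathcal{L}^{\otimes n}$ nonvanishing at \emph{every} point $y$ for which $H^{p+q}(\mathit{\Gamma}_y A)$ (or the corresponding target summand) is nonzero, so that the vertical maps in your square become isomorphisms on the whole of $E_2^{p,q}$ and $E_2^{p+2,q-1}$. But nothing forces this set of points to be finite, or even to be contained in the complement of a hypersurface: the support of $A$ in $\Sing Y = \mathbb{P}^{c-1}_k$ can be all of $\Sing Y$, in which case no section of any $\mathcal{L}^{\otimes n}$ avoids it. Your parenthetical hedge (``finite, by local finite-dimensionality\ldots or more honestly, on the relevant support'') is exactly the place where the argument breaks, and it does not repair itself.

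The paper's proof avoids this by arguing \emph{elementwise}, letting the section depend on the element. Given $\alpha\in\ker d_2^{p-2n,q}$, the key observation is that $\alpha$ lies in a \emph{direct sum} $\bigoplus_{y\in\mathcal{H}_q} H^{p+q-2n}(\mathit{\Gamma}_y A)$, so it is supported on only finitely many points $y_1,\ldots,y_m$. Now (using that $k$ is infinite) one can choose a degree-$n$ section $s$ missing just these finitely many $y_i$. The resulting $\tilde{s}$ need not be an isomorphism on all of $E_2^{p-2n,q}$, but it \emph{is} an isomorphism on the summands indexed by $y_1,\ldots,y_m$, hence injective on the subgroup containing $\alpha$. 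From the square one gets $d_2^{p,q}(\tilde{s}\alpha)=0$, injectivity of $d_2^{p,q}$ gives $\tilde{s}\alpha=0$, and then injectivity of $\tilde{s}$ on the support of $\alpha$ gives $\alpha=0$. The surjectivity statement is handled dually: given $\beta$ in the target, choose a section avoiding the finitely many points supporting $\beta$, lift $\beta$ through $\tilde{s}$ on those summands, use surjectivity of $d_2^{p,q}$, and push forward. The point is that only the \emph{element}, not the whole $E_2$-term, needs to be controlled, and elements of direct sums have finite support.
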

\begin{proof}
Let us denote by $\mathcal{H}_q$ the set of points of $\Sing Y$ whose closure have dimension $q$. As $H$ is coproduct preserving we have for each $p,q$ isomorphisms
\begin{displaymath}
E_2^{p,q} = H^{p+q}(A_q) \iso H^{p+q}(\coprod_{y\in \mathcal{H}_q} \mathit{\Gamma}_yA) \iso \bigoplus_{y\in \mathcal{H}_q} H^{p+q}(\mathit{\Gamma}_yA).
\end{displaymath}
Suppose $d_{2}^{p,q}$ is injective and say $\a \in \ker d_{2}^{p-2n,q}$ where $n>0$. By the above direct sum decomposition we can find finitely many points $\{y_i\}_{i=1}^m$ in $\Sing Y$ such that $\a$ is zero outside of $\oplus_{i=1}^m H^{p+q-2n}(\mathit{\Gamma}_{y_i}A)$. We have assumed $k$ is infinite, so we can find a hypersurface in $Y$ of degree $n$, corresponding to a section $s$ of $\mathcal{L}^{\otimes n}$ missing all of the $y_i$. Lemma~\ref{lem_ss_map} then gives us a commutative square
\begin{displaymath}
\xymatrix{
E_2^{p-2n,q} \ar[d]_-{\tilde{s}} \ar[rr]^-{d_{2}^{p-2n,q}} && E_2^{p-2n+2),q-1} \ar[d]^-{\tilde{s}} \\
E_2^{p,q} \ar[rr]_-{d_{2}^{p,q}} && E_2^{p+2,q-1}
}
\end{displaymath}
So $d_{2}^{p,q}\tilde{s}(\a) = 0$. Since $\tilde{s}$ is an isomorphism when restricted to the components corresponding to the $y_i$ on which $\a$ is supported, as the section $s$ is invertible in a neighbourhood of each $y_i$, and $d_{2}^{p,q}$ is injective we deduce that $\a=0$. Thus $d_{2}^{p-2n,q}$ is injective as claimed.

The proof of surjectivity uses essentially the same ideas and so we omit the details.
\end{proof}

  \bibliography{greg_bib}

\end{document}